\newtheorem{theorem}{Theorem}
\newtheorem{definition}{Definition}
\newtheorem{example}{Example}
\newtheorem{lemma}{Lemma}
\newtheorem{proposition}{Proposition}
\newtheorem{remark}{Remark}
\renewcommand{\qedsymbol}{}
\def\qedsymbol{$\blacksquare$}
\newcommand{\indicatrice}{\mathds{1}}
\begin{document}
\title[Bivariate Cox model and copulas]{Bivariate Cox model and copulas}
\author{Mohamed Achibi}
\address[Mohamed Achibi]
{Université Pierre et Marie Curie\\
LSTA-Paris 6\\
175 Rue du Chevaleret\\
75013 Paris\\
France
\newline
Snecma\\
Site de Villaroche\\
Rond-Point René Ravaud\\
77550 Moissy Cramayel\\
France}%
\email{mohamed.achibi@snecma.fr}%
\author{Michel Broniatowski}
\address[Michel Broniatowski]
{Université Pierre et Marie Curie\\
LSTA-Paris 6\\
175 Rue du Chevaleret\\
75013 Paris\\
France\\
}%
\email{michel.broniatowski@upmc.fr}%
\date{June 18, 2010}
\keywords{Cox models, positive quadrant dependence, archimedean copula, extreme value copulas, asymmetric logistic copula, frailty models.}%
\subjclass[2000]{62H05;62N05}
\begin{abstract}
This paper introduces a new class of Cox models for dependent bivariate data. The impact of the covariate on the dependence of the variables is captured through the modification of their copula. Various classes of well known copulas are stable under the model (archimedean type and extreme value copulas), meaning that the role of the covariate acts in a simple and explicit way on the copula in the class; specific parametric classes are considered.
\end{abstract}
\maketitle
\section{Introduction} \label{intro} 
The aim of this paper is to present a new description for bivariate dependence. It extends the proportional hazard (PH) model and is relevant in various fields of biostatisctics and industry. Denote $z$ an
environmental covariate and consider two positive random variables $X$ and $Y$ with absolutely continuous survival functions 
\begin{equation}
\overline{F}^{{z}}(x)=\mathbb{P}(X>x;z)  \label{survie_F}
\end{equation}
and 
\begin{equation}
\overline{G}^{{z}}(y)=\mathbb{P}(Y>y;z)  \label{survie_G}
\end{equation}
under the covariate $z$. The joint distribution of $(X,Y)$ is modelled through (\ref{survie_F}) and (\ref{survie_G}) and through the conditional s.d.f
\begin{equation}
\overline{G}_{x}^{{z}}(y)=\mathbb{P}(Y>y|X>x;z).  \label{survie_Gz}
\end{equation}
The precise setting of our model is as follows: we assume that the survival function of $X$ depends on $z$ through a PH model, i.e 
\begin{equation}
\lambda _{X}^{z}(x):=-\frac{d}{dx}\log \overline{F}^{z}(x)=\lambda
_{X}^{0}(x)\Phi (z)  \label{Cox_X}
\end{equation}
for some positive function $z\rightarrow \Phi (z)$ and some baseline hazard $\lambda _{X}^{0}(x)$ corresponding to $z=0.$
Denote 
\begin{equation*}
\lambda _{\left. Y\right\vert X>x}^{z}(y):=-\frac{d}{dy}\log \overline{G}%
_{x}^{{z}}(y).
\end{equation*}
The conditional survival function of $Y$ depends on $z$ through the PH model
\begin{equation}
\lambda _{\left. Y\right\vert X>x}^{z}(y)=\lambda _{\left. Y\right\vert
X>x}^{0}(y)\Psi \left( z\right)   \label{Cox_YX}
\end{equation}
for some positive function $z\rightarrow \Psi (z)$ and some baseline hazard $%
\lambda _{\left. Y\right\vert X>x}^{0}.$
Denote 
\begin{equation}
{(\text{M}1)}\left\{ 
\begin{array}{c}
\lambda _{\scriptscriptstyle{X}}^{z}(x)=\lambda _{\scriptscriptstyle{X}}^{%
\scriptscriptstyle{0}}(x)\Phi (z) \\ 
\lambda _{\scriptscriptstyle{Y|X>x}}^{z}(y)=\lambda _{\scriptscriptstyle{%
Y|X>x}}^{\scriptscriptstyle{0}}(y)\Psi (z)%
\end{array}%
\right.   \label{Cox cond X>x}
\end{equation}
Other models for bivariate dependence have been defined, for example \citep{Clayton1978, Clayton_Cuzick, Oakes1989}. In \citet{Clayton1978} where joint lifetimes of sons and fathers are considered, a Cox model is used to handle the role of the covariate upon the margins, and the association between the margins is described through the fact that the local association measure
\begin{equation*}
\theta \left( x,y\right) :=\frac{\lambda \left( \left. y\right\vert
X=x\right) }{\lambda \left( \left. y\right\vert X>x\right) }
\end{equation*}
is independent upon $x$ and $y$ and upon $z.$ This index expresses the influence of parental history of a given disease upon the incidence in the offspring. $X$ is the lifetime of the father, while $Y$ describes that of
the son. It is assumed that association arises because the two members of a given pair share some common influence and not because one lifetime influences the other. In fact these models are of frailty type, and the
margins are independent conditionally upon the covariate. More globally the standard model building strategy is to have marginal survival functions and a copula for their dependency; see f.i. \citet{Choi_Matthews} or \citet{Said}.
In \citet{Lindstrom} a study on the familial concordance in cancer survival based on a Swedish population showed that cancer specific survival in parents predicts survival for the same cancer in their children. The risk in dying in children in relation to parental survival was modelled by use of two PH models; first parental survival was modelled and next survival risk in children in relation to parental survival was assessed; thus in this case the lifetime of the father influences the lifetime of the son whatever the level of the covariate (here  $z$ represents the different cancer sites), however in relation with it. In this case Model (\ref{Cox cond X>x}) seems to be natural.
Setting $x=0$ in  (\ref{Cox_YX}) shows that $Y$ follows a PH model so that (\ref{Cox cond X>x}) is PH on both components. The copula of the couple of r.v's $\left( X,Y\right) $ for a given $z$ can be written in terms of the values of $\Phi (z)$ and $\Psi (z),$ as seen further. The unusual feature of this model is positive in a number of cases, since $z$ acts on the margins, but also specifically on their dependence. The claim that the goal of copula modelling is to distinguish the parameters for the dependency from those associated to the marginal models cannot be considered as a general principle. Other authors have considered cases when the association of the margins is specifically related to their distributions; see f.i. \citet{Gupta2008}. In conjunction with (\ref{Cox cond X>x}) it is worth noting that direct approaches based on regression type models cannot satisfy our purpose. Indeed consider for example a model defined through
\[
\left\{
\begin{array}
[c]{c}%
X=r(z,U)\\
Y=s(z,V)
\end{array}
\right.
\]
with $r(z,.)$ and $s(z,.)$ strictly increasing for all $z$. Then following \citet{Nelsen2006}, Theorem 2.4.3, $(X,Y)$ has the same copula as $(U,V)$ for all $z$, which implies that the covariate $z$ plays no role in the dependency of $X$ and $Y$.
Introducing a new model imposes to determine its range of applicability; it will be shown that (\ref{Cox cond X>x}) is adapted for positive quadrant dependence (PQD) between the margins, a concept which is recalled in Section \ref{rappel}. Stating that the margins follow a PH (typically Cox) model can be checked using standard tools \citep[see f.i.][]{Grambsch}. PQD property can be tested through a Kolmogorov-Smirnov test \citep[see f.i.][]{Scaillet}.
\bigskip Let us now show the main results which we present in connection with model (\ref{Cox cond X>x}):
\begin{enumerate}
\item The $TP_2$ class of sdf's is a subclass of the Positive Quadrant Dependence (PQD) sdf class and is stable under the model which is properly defined when the hazard baseline $\overline{H}^{\scriptscriptstyle{0}}$ is $%
TP_2$. This class appears quite naturally as the one under which the model is properly defined, and it is appropriate for the description of positive dependence between its margins. Definitions of $TP_2$ and PQD properties are given in Section \ref{rappel}.
\item Since the $TP_2$ property of a multivariate sdf refers only to its copula, model (\ref{Cox cond X>x}) describes the changes of the baseline copula induced by the covariate. Also this implies that the model is valid
independently of marginal distributions. Only the structure of dependence is involved in the domain of validity of the model.
\item Two main classes of copulas are stable under the model namely: when the baseline bivariate copula is in such a class, so is the copula for all value of the covariate $z$. The class of extreme values copulas (evc) enjoys this property. The class of extended archimedean copulas is also stable under the model. This class results as a special by-product of a technique intended to produce asymmetric copulas due to Genest et al.; see %
\citet{Discussion_Genest} and \citet{Liebscher}. The so-called class of logistic asymmetric copulas \citep[see][]{Tawn1988}, which is a simple extension of the Gumbel family of copulas, enjoys an important role in the present model. It is stable under the model and admits a simple parametrization. The covariate $z$ acts in an adaptive way when the value of the covariate is changed. It is the only bivariate distribution in the class
of frailty models which enjoys such properties in the model.
\end{enumerate}
This paper is organized as follows. In Section \ref{rappel} we briefly recall the necessary background from bivariate dependence. Section \ref{covariate} describes the model. In Section \ref{logistic} we focus on the
asymmetric Gumbel class of copulas, which is the natural parametric setting of our model; we also provide some connection with bivariate frailty models. All proofs are deferred to the Appendix.
\section{Some useful facts in bivariate dependence} \label{rappel}
Let $X$ and $Y$ be two random variables (r.v) with joint sdf $\overline{H}$, with margins $\overline{F}$ and $\overline{G}$. All dependence properties of $X$ and $Y$ are captured through the \textit{survival copula} $C$ which is a cdf defined on $[0,1]\times [0,1]$ through
\begin{eqnarray*}
C(u,v) &=&\overline{H}(\overline{F}^{\leftarrow}(u),\overline{G}^{\leftarrow}(v))
\end{eqnarray*}
where $u$ and $v$ belong to $[0,1]$ and where $\overline{F}^{\leftarrow}(t):=\sup \{x:\overline{F}(x)\geq t\}$. It is easily checked that $C$ is indeed a copula. The definition of a copula is given in \citet{Nelsen2006}, definition 2.2.2.
We will make use of the following definition and notation.
\begin{definition}[Min-id property]
A bivariate cdf $H$ is min-infinitely divisible (min-id) if for all positive $\gamma$, $\overline{H}^{\gamma}$ is a sdf.
\end{definition}
Assume that $H$ is min-id and let $\mathbb{V}=(X,Y)$ be a random vector with sdf $\overline{H}$. Then for all $n$ in $\mathbb{N}$, $\overline{H}^{1/n}$ is a sdf. Further let $\left(X_{i},Y_{i}\right), \,i=1,\ldots ,n$ be $n$ copies independent and identically distributed with sdf $\overline{H}^{1/n}.$ It holds 
\begin{equation*}
\mathbb{V}\overset{d}{=}(\min_{i}~X_{i},\min_{i}~Y_{i}).
\end{equation*}
\begin{definition}[PQD property]
$X$ and $Y$ are positively quadrant dependent (PQD) iff, for all $(x,y)$ in $\mathbb{R}^{2},\mathbb{P}(X>x,Y>y)\geq \mathbb{P}(X>x)\mathbb{P}(Y>y)$; in this case we also say that $\overline{H}$ is PQD.
\end{definition}
\begin{definition}[$TP_2$ property]
A mapping $\phi$ from $\overline{\mathbb{R}}^{2}$ onto $\mathbb{R}$ is totally positive of order 2 ($TP_{2}$) if $\phi(x,y)\geq 0$ for all $(x,y)$ in $\overline{\mathbb{R}}^{2}$ and 
$\begin{vmatrix}
\phi (x_{1},y_{1}) & \phi (x_{1},y_{2}) \\ 
\phi (x_{2},y_{1}) & \phi (x_{2},y_{2})%
\end{vmatrix}%
=\phi (x_{1},y_{1})\phi (x_{2},y_{2})-\phi (x_{1},y_{2})\phi
(x_{2},y_{1})\geq 0$, for all $x_{1}<x_{2}$ and $y_{1}<y_{2}$.
\end{definition}
\begin{remark} \label{rq 1}
When $\phi$ is $\mathcal{C}^{2}$, then $\phi $ is $TP_{2}$ iff 
\begin{equation}  \label{TP2_derivee}
\frac{\partial \phi }{\partial x}(x,y)\frac{\partial \phi }{\partial y}(x,y)\leq \frac{\partial ^{2}\phi }{\partial x\partial y}(x,y)\phi (x,y)
\end{equation}
\end{remark}
The proof is given in \citet{Resnick1987}, p.254.\\
\newline We also recall the following results.
\begin{theorem}[\citet{Joe1997}, Theorem 2.3] \label{relation_dep}
If $\overline{H}$ is a $TP_{2}$ sdf, then $\overline{H}$ is $PQD$.
\end{theorem}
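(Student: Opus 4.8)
The plan is to read off the PQD inequality $\overline{H}(x,y)\geq \overline{F}(x)\,\overline{G}(y)$ directly from the $TP_2$ determinant inequality of Remark/Definition above, by evaluating that inequality at a cleverly chosen pair of points: the point $(x,y)$ itself together with a ``lower-left corner'' sent towards $(-\infty,-\infty)$. The key observation I would use is that the margins of $\overline{H}$ are recovered as one-sided limits of the joint survival function, namely
\begin{equation*}
\lim_{a\to-\infty}\overline{H}(a,y)=\overline{G}(y),\qquad \lim_{b\to-\infty}\overline{H}(x,b)=\overline{F}(x),\qquad \lim_{a,b\to-\infty}\overline{H}(a,b)=1,
\end{equation*}
which follow at once from $\overline{H}(a,y)=\mathbb{P}(X>a,Y>y)$ by monotone convergence as $a\downarrow-\infty$, and similarly for the other two limits.

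First I would fix $(x,y)$ and choose reals $a<x$ and $b<y$. Since $\overline{H}\geq 0$ everywhere (it is a probability), the $TP_2$ hypothesis, applied with $x_1=a<x_2=x$ and $y_1=b<y_2=y$, yields
\begin{equation*}
\overline{H}(a,b)\,\overline{H}(x,y)\;\geq\;\overline{H}(a,y)\,\overline{H}(x,b).
\end{equation*}
Each factor on both sides converges as $a\to-\infty$ and $b\to-\infty$, so the inequality passes to the limit.

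Letting $a\to-\infty$ and $b\to-\infty$ and inserting the three boundary limits above, the left-hand side tends to $1\cdot\overline{H}(x,y)=\overline{H}(x,y)$, while the right-hand side tends to $\overline{G}(y)\,\overline{F}(x)$. Hence $\overline{H}(x,y)\geq \overline{F}(x)\,\overline{G}(y)$ for every $(x,y)$, which is exactly the PQD property, and the proof is complete. I do not expect any genuine obstacle here; the only point requiring a little care is the justification of the boundary limits, which rests on the elementary fact that a bivariate survival function recovers its margins when one coordinate is sent to $-\infty$. (Equivalently, since the $TP_2$ property is posited on all of $\overline{\mathbb{R}}^2$, one may simply evaluate the determinant inequality directly at $x_1=y_1=-\infty$, bypassing the limiting step altogether.)
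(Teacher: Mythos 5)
Your proof is correct. There is, however, nothing in the paper to compare it against: the statement is quoted as Theorem 2.3 of \citet{Joe1997} and used as a black box, with no proof supplied in the text or in the appendix. Your argument is the standard one for this implication, and it is sound as written: applying the $TP_{2}$ two-by-two inequality at $x_{1}=a<x_{2}=x$, $y_{1}=b<y_{2}=y$ gives $\overline{H}(a,b)\,\overline{H}(x,y)\geq \overline{H}(a,y)\,\overline{H}(x,b)$; the three boundary limits $\overline{H}(a,b)\to 1$, $\overline{H}(a,y)\to \overline{G}(y)$, $\overline{H}(x,b)\to \overline{F}(x)$ follow from continuity of probability along monotone sequences of events (and, since the paper takes $X$ and $Y$ positive, they even hold exactly once $a,b<0$, so no genuine limiting is needed); and the inequality passes to the limit because each factor converges separately, the two right-hand factors depending on $a$ and $b$ individually. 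The parenthetical shortcut of evaluating the determinant inequality directly at $x_{1}=y_{1}=-\infty$ is also legitimate under the paper's definition of $TP_{2}$ on $\overline{\mathbb{R}}^{2}$, provided one adopts the natural convention that $\overline{H}$ is extended to the boundary by those same limits, which is exactly what your first display records.
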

\begin{theorem}[\citet{Joe1997}, Theorem 2.6] \label{min-id TP2}
Let $H$ be a cdf, then $H$ is min-id iff $\overline{H}$ is $TP_{2}$.
\end{theorem}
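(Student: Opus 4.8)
The plan is to work directly with the bivariate survival function $\overline{H}$ and to reduce both the min-id property and the $TP_{2}$ property to conditions on the four corner values of an arbitrary rectangle. First I would observe that for any $\gamma>0$ the function $\overline{H}^{\gamma}$ automatically has admissible margins and limits: its marginal survival functions are $\overline{F}^{\gamma}$ and $\overline{G}^{\gamma}$, which are genuine univariate survival functions, and the behaviour at $\pm\infty$ is preserved by raising to the power $\gamma$. Hence $\overline{H}^{\gamma}$ is a sdf if and only if it is $2$-increasing, i.e. the signed mass it assigns to every rectangle $(x_{1},x_{2}]\times(y_{1},y_{2}]$ is nonnegative. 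Writing $a=\overline{H}(x_{1},y_{1})$, $b=\overline{H}(x_{1},y_{2})$, $c=\overline{H}(x_{2},y_{1})$, $d=\overline{H}(x_{2},y_{2})$ for $x_{1}<x_{2}$, $y_{1}<y_{2}$, monotonicity of $\overline{H}$ gives $a\geq b\geq d$ and $a\geq c\geq d$; the $2$-increasing condition for $\overline{H}^{\gamma}$ reads $a^{\gamma}+d^{\gamma}\geq b^{\gamma}+c^{\gamma}$, while the $TP_{2}$ condition is $ad\geq bc$. Thus the theorem reduces to showing, rectangle by rectangle, that $ad\geq bc$ holds if and only if $a^{\gamma}+d^{\gamma}\geq b^{\gamma}+c^{\gamma}$ holds for every $\gamma>0$.

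For the implication min-id $\Rightarrow TP_{2}$, I would fix a rectangle with all corner values positive and set $f(\gamma):=a^{\gamma}+d^{\gamma}-b^{\gamma}-c^{\gamma}$. By hypothesis $f(\gamma)\geq 0$ for all $\gamma>0$, while $f(0)=0$, so the right derivative of $f$ at $0$ must be nonnegative. Since $f'(0^{+})=\log a+\log d-\log b-\log c=\log\frac{ad}{bc}$, this yields $ad\geq bc$. The degenerate cases, where some corner value vanishes, are handled by letting $\gamma\to 0^{+}$: if $d=0<a$ and both $b,c$ were positive, then $a^{\gamma}\to 1$ whereas $b^{\gamma}+c^{\gamma}\to 2$, contradicting the inequality for small $\gamma$, so necessarily $bc=0=ad$ and $TP_{2}$ still holds.

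For the converse $TP_{2}\Rightarrow$ min-id, I would pass to logarithms and read the conclusion as a majorization statement. The monotonicity bound $a\geq\max(b,c)$ and the $TP_{2}$ inequality $ad\geq bc$ say exactly that the pair $(\log a,\log d)$ weakly majorizes the pair $(\log b,\log c)$, i.e. the partial sums of the decreasing rearrangements satisfy $\log a\geq\max(\log b,\log c)$ and $\log a+\log d\geq\log b+\log c$. Applying the Hardy--Littlewood--P\'olya theorem to the increasing convex function $t\mapsto e^{\gamma t}$ (for $\gamma>0$) then gives $a^{\gamma}+d^{\gamma}\geq b^{\gamma}+c^{\gamma}$, which is precisely the $2$-increasing condition for $\overline{H}^{\gamma}$. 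As this holds for every rectangle and every $\gamma>0$, the function $\overline{H}^{\gamma}$ is a sdf for all $\gamma>0$, so $H$ is min-id.

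I expect the main obstacle to be the converse direction, specifically recognising the per-rectangle inequality as a weak-majorization problem and invoking the correct comparison theorem for \emph{increasing} convex functions; ordinary majorization, which requires equal total sums, does not apply here because at the level of logarithms one only has $\log a+\log d\geq\log b+\log c$ rather than equality. A secondary, purely technical, point is the careful treatment of the boundary cases in which one or more corner values of $\overline{H}$ vanish, which I would dispatch by the limiting argument sketched above rather than by differentiation.
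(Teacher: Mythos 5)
Your proof is correct, but there is nothing in the paper to compare it against: the paper does not prove this statement, it imports it verbatim from \citet{Joe1997} (Theorem 2.6). Judged on its own, your argument works. The reduction to a per-rectangle statement is legitimate: for $\gamma>0$ the limits, monotonicity and right-continuity of $\overline{H}^{\gamma}$ are automatic, so min-infinite divisibility is exactly the family of inequalities $a^{\gamma}+d^{\gamma}\geq b^{\gamma}+c^{\gamma}$ over all rectangles, while $TP_2$ is exactly $ad\geq bc$. The derivative-at-zero argument for the forward implication is sound, including the limiting treatment of vanishing corners, and for the converse the weak submajorization of $(\log b,\log c)$ by $(\log a,\log d)$ combined with the comparison theorem for \emph{increasing} convex functions is valid; you are right that plain majorization (equal sums) would not apply. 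Two remarks. First, the vanishing-corner cases of the \emph{converse} need no limiting argument at all: if $d=0$ then $TP_2$ forces $bc=0$, say $c=0$, and the required inequality collapses to $a^{\gamma}\geq b^{\gamma}$, immediate from monotonicity (the limiting argument you point to was designed for the forward direction). Second, the majorization machinery can be bypassed: both the corner ordering and the $TP_2$ inequality are stable under $t\mapsto t^{\gamma}$, so it suffices to prove the case $\gamma=1$, where for $d>0$ one has $a+d-b-c\geq \frac{bc}{d}+d-b-c=\frac{(b-d)(c-d)}{d}\geq 0$. That elementary route is shorter and fully self-contained; your route costs an appeal to the Tomi\'c--Weyl/Hardy--Littlewood--P\'olya theorem but yields more, namely $\phi(\log a)+\phi(\log d)\geq\phi(\log b)+\phi(\log c)$ for every increasing convex $\phi$, not just $\phi(t)=e^{\gamma t}$.
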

The relation between properties of the s.d.f's and their copulas is captured in the following result.
\begin{lemma} \label{min inf div}
Let $\overline{H}$ be a sdf with copula $C$. Then $\overline{H}$ is $TP_2$ iff $C$ is $TP_2$.
\end{lemma}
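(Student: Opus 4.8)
The plan is to reduce the whole equivalence to one elementary observation about $2\times2$ determinants combined with the order-reversing nature of survival functions, and then to transport the $TP_2$ inequality back and forth between $\overline{H}$ and $C$ using the two representations $C(u,v)=\overline{H}(\overline{F}^{\leftarrow}(u),\overline{G}^{\leftarrow}(v))$ (the definition) and $\overline{H}(x,y)=C(\overline{F}(x),\overline{G}(y))$ (the survival-copula form of Sklar's theorem). First I would dispose of the non-negativity clause in the definition of $TP_2$: since $\overline{H}$ is a survival function and $C$ is a copula, both take values in $[0,1]$, so only the determinant inequality is in question. For a function $\phi$ of two arguments set $\Delta(\phi;s_1,s_2;t_1,t_2):=\phi(s_1,t_1)\phi(s_2,t_2)-\phi(s_1,t_2)\phi(s_2,t_1)$. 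The decisive remark is that interchanging the two first-coordinate points \emph{and} the two second-coordinate points leaves this quantity unchanged, i.e. $\Delta(\phi;s_1,s_2;t_1,t_2)=\Delta(\phi;s_2,s_1;t_2,t_1)$, because swapping both rows and both columns of a $2\times2$ matrix does not alter its determinant.

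For the implication that $C$ being $TP_2$ forces $\overline{H}$ to be $TP_2$, I would fix $x_1<x_2$ and $y_1<y_2$, put $u_i=\overline{F}(x_i)$ and $v_j=\overline{G}(y_j)$, and use $\overline{H}(x,y)=C(\overline{F}(x),\overline{G}(y))$ to get $\Delta(\overline{H};x_1,x_2;y_1,y_2)=\Delta(C;u_1,u_2;v_1,v_2)$. Since the margins are non-increasing, $u_1\geq u_2$ and $v_1\geq v_2$; when these inequalities are strict the swap remark rewrites the right-hand side as $\Delta(C;u_2,u_1;v_2,v_1)$ with $u_2<u_1$ and $v_2<v_1$, which is $\geq 0$ by the $TP_2$ property of $C$, while if $u_1=u_2$ or $v_1=v_2$ the determinant has a repeated row or column and is therefore $0$. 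The converse is symmetric: fixing $a_1<a_2$ and $b_1<b_2$ in $[0,1]$ and writing $\xi_i=\overline{F}^{\leftarrow}(a_i)$, $\eta_j=\overline{G}^{\leftarrow}(b_j)$, the definition $C(u,v)=\overline{H}(\overline{F}^{\leftarrow}(u),\overline{G}^{\leftarrow}(v))$ gives $\Delta(C;a_1,a_2;b_1,b_2)=\Delta(\overline{H};\xi_1,\xi_2;\eta_1,\eta_2)$; because the quasi-inverses inherit non-increasingness one has $\xi_1\geq\xi_2$ and $\eta_1\geq\eta_2$, and the same order-reversal converts this into the $TP_2$ determinant of $\overline{H}$ at strictly ordered arguments (or a vanishing one in case of ties), which is $\geq 0$ by hypothesis.

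The part that I expect to require genuine care is not the determinant algebra, which is routine, but the passage through the generalized inverses. Concretely I would need to justify both representations at once: the identity $\overline{H}(x,y)=C(\overline{F}(x),\overline{G}(y))$ is the survival-copula version of Sklar's theorem, exact under the continuity of the margins assumed throughout the model, and I must check the companion composition $\overline{F}\big(\overline{F}^{\leftarrow}(u)\big)=u$ holds on the relevant range. The remaining subtlety is the handling of non-strictly-decreasing margins, where distinct arguments on one side collapse to equal arguments on the other; as indicated above, each such collapse produces a $2\times2$ determinant with a repeated row or column, hence a trivially non-negative value, so the only real bookkeeping is to organise the two directions so that every tie is always routed to the side where the inequality is automatic.
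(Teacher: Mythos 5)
Your proof is correct, and it takes a genuinely different route from the paper's. The paper proves the lemma through the differential characterization of $TP_2$ recalled in Remark \ref{rq 1}: it writes $\overline{H}(x,y)=C(\overline{F}(x),\overline{G}(y))$, computes the three partial derivatives by the chain rule, and observes that the factor $f(x)g(y)$ cancels from both sides of \eqref{TP2_derivee}, turning the inequality for $\overline{H}$ into the same inequality for $C$ at the point $(\overline{F}(x),\overline{G}(y))$. You instead argue from the raw determinant definition: composing with the non-increasing maps $\overline{F},\overline{G}$ (one direction) or with the quasi-inverses $\overline{F}^{\leftarrow},\overline{G}^{\leftarrow}$ (the other) reverses the order of the arguments, the simultaneous row-and-column swap of a $2\times 2$ matrix leaves the determinant unchanged, and ties produce a repeated row or column, hence a vanishing determinant. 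Your version buys generality and rigor: it uses no differentiability at all, whereas the paper's computation tacitly assumes that $C$ is $\mathcal{C}^2$, that densities $f,g$ exist and are positive where the cancellation is performed, and that $(\overline{F},\overline{G})$ covers $(0,1)^2$ so that the inequality for $C$ is obtained at every $(u,v)$ --- all harmless under the paper's standing absolute-continuity assumption, but left implicit; you also spell out both implications, while the paper displays only the passage from $\overline{H}$ to $C$ and leaves the converse reading (multiply back by $f(x)g(y)$) to the reader. What the paper's route buys is brevity and coherence with the rest of the text: granted Remark \ref{rq 1}, it is a three-line chain-rule computation, and \eqref{TP2_derivee} is precisely the form of $TP_2$ invoked later, for instance in the Gumbel--Barnett example. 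The one point you should make fully explicit is the one you yourself flagged: the two representations of $C$ you use are simultaneously valid because the margins are continuous, which is the paper's standing assumption; for general margins the identity $\overline{H}(x,y)=C(\overline{F}(x),\overline{G}(y))$ requires the standard flat-interval argument from Sklar's theorem.
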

\begin{proof}
By Remark \ref{rq 1}, $\overline{H}\; TP_2 \Rightarrow \frac{\partial \overline{H}}{\partial x}(x,y)\frac{\partial \overline{H}}{\partial y}(x,y)\leq \frac{\partial^2 \overline{H}}{\partial x \partial y}(x,y) \times \overline{H}(x,y)$. Furthermore, $\overline{H}(x,y)=C(\overline{F}(x),\overline{G}(y))$. Therefore,
\begin{eqnarray*}
\frac{\partial \overline{H}}{\partial x}(x,y)&=&-\frac{\partial C}{\partial u}(u,\overline{G}(y))\Big|_{u=\overline{F}(x)} \times f(x) \\
\frac{\partial \overline{H}}{\partial y}(x,y)&=&-\frac{\partial C}{\partial v}(\overline{F}(x),v)\Big|_{v=\overline{G}(y)} \times g(y) \\
\frac{\partial^2 \overline{H}}{\partial x \partial y}(x,y)&=&\frac{\partial^2 C}{\partial u \partial v}(u,v)\Big|_{\substack{ u=\overline{F}(x) \\ v=\overline{G}(y)}} \times f(x) \times g(y)
\end{eqnarray*}
Hence,
\begin{equation*}
\frac{\partial C}{\partial u}(u,\overline{G}(y))\Big|_{u=\overline{F}(x)}\times \frac{\partial C}{\partial v}(\overline{F}(x),v)\Big|_{v=\overline{G}(y)}\leq \frac{\partial^2 C}{\partial u \partial v}(u,v)\Big|_{\substack{ u=\overline{F}(x)  \\ v=\overline{G}(y)}}\times C(\overline{F}(x),\overline{G}(y))
\end{equation*}
\end{proof}
\begin{definition}[Archimedean copula]
An \textit{Archimedean copula} is a function $C$ from $[0,1]^2$ to $[0,1]$ given by $C(u,v)=\varphi^{[-1]}(\varphi(u)+\varphi(v))$, where $\varphi$ is a continuous strictly decreasing convex function from $[0,1]$ to $[0,\infty]$ such that $\varphi(1)=0$, and where $\varphi^{[-1]}$ denotes the "pseudo-inverse" of $\varphi$, namely
\begin{equation*} {\varphi^{[-1]}(t)=}
\left\{ 
\begin{array}{cc}
\varphi^{-1}(t) & \text{for t in}\; [0,\varphi(0)] \\ 
0 & \text{for}\; t\geq \varphi(0)%
\end{array}%
\right.
\end{equation*}
When $\varphi(0)=\infty$, $\varphi$ is said to be strict and $\varphi^{[-1]}\equiv \varphi^{-1}$. $\varphi$ is called a generator.
\end{definition}
The class of so called \textit{extreme value copulas} (evc) is important in this model, although not related here with the theory of bivariate extremes; therefore we define an \textit{extreme value copula} through the basic Pickands representation, without further reference to the theory of bivariate extremes.
\begin{theorem}[Pickands Theorem] \label{Pickands theo}
$C$ is an \textit{extreme value copula iff there exists a convex function} $A$ defined on $[0,1]$, which satisfies $A(0)=A(1)=1$ and $\max (t,1-t)\leq A(t)\leq 1$ such that
\begin{equation} \label{Pickands}
C(u,v)=\exp {\Big[\log (uv)A\Big(\frac{\log v}{\log uv}\Big)\Big]}.
\end{equation}
\end{theorem}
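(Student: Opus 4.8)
The plan is to read ``extreme value copula'' in its standard max-stable sense, namely that $C$ satisfies $C(u^{t},v^{t})=C(u,v)^{t}$ for every $t>0$; otherwise the statement would be vacuous, the right-hand form being the definition itself. The whole argument is cleanest after the logarithmic change of variables $u=e^{-s}$, $v=e^{-r}$ with $s,r\ge 0$, under which I set
\begin{equation*}
\ell(s,r):=-\log C(e^{-s},e^{-r}).
\end{equation*}
A direct computation gives $\log(uv)=-(s+r)$ and $\log v/\log(uv)=r/(s+r)$, so the claimed representation \eqref{Pickands} is \emph{equivalent} to $\ell(s,r)=(s+r)\,A\!\big(r/(s+r)\big)$. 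Thus I only have to produce such an $A$ and verify its four stated properties.

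First I would treat the direction evc $\Rightarrow$ representation. Max-stability translates verbatim into the positive homogeneity $\ell(ts,tr)=t\,\ell(s,r)$, so dividing by $s+r$ and setting $w:=r/(s+r)$ suggests the definition $A(w):=\ell(1-w,w)$; homogeneity then returns $\ell(s,r)=(s+r)A(r/(s+r))$ exactly. The boundary values are immediate from the uniform margins of a copula: $A(0)=\ell(1,0)=-\log C(e^{-1},1)=-\log e^{-1}=1$, and symmetrically $A(1)=1$. The two-sided bound is a translation of the Fr\'echet--Hoeffding inequalities: the upper bound $C\le M$, i.e.\ $C(u,v)\le\min(u,v)$, gives $\ell(s,r)\ge\max(s,r)$ and hence $A(t)\ge\max(t,1-t)$, while the positive quadrant dependence of extreme value copulas, $C(u,v)\ge uv$ (cf.\ Theorem \ref{relation_dep}), gives $\ell(s,r)\le s+r$ and hence $A(t)\le 1$.

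For the converse I would start from a convex $A$ with $A(0)=A(1)=1$ and $\max(t,1-t)\le A(t)\le 1$, and verify in turn that $C(u,v)=\exp[\log(uv)A(\log v/\log uv)]$ is grounded, has uniform margins (this is precisely where $A(0)=A(1)=1$ is used), and is $2$-increasing; max-stability is then read off directly from the homogeneity of $(s+r)A(r/(s+r))$ in $(s,r)$, which forces $C(u^{t},v^{t})=C(u,v)^{t}$.

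The main obstacle in both directions is the interplay between convexity of $A$ and the $2$-increasing property of $C$. I expect to handle it by computing the mixed second derivative $\partial^{2}C/\partial u\partial v$ and expressing its sign, after the substitution above, through a factor that is nonnegative precisely when $A''\ge 0$, so that in the smooth case $C$ is a copula iff $A$ is convex; the non-smooth case is then absorbed either by approximating a convex $A$ by smooth convex functions, or by arguing straight from the definition of $2$-increasingness that the mass assigned to every rectangle is nonnegative exactly when $A$ is convex. This single equivalence is what makes convexity both necessary (forward direction) and sufficient (converse), and it is the only genuinely computational step of the proof.
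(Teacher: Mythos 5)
First, note that the paper contains \emph{no proof} of this statement: it explicitly announces that it ``defines an extreme value copula through the basic Pickands representation,'' so Theorem \ref{Pickands theo} functions there as a definition together with a citation of the classical characterization, and there is nothing to compare your argument against --- you are attempting strictly more than the paper does. Your reading of ``extreme value copula'' as max-stability, $C(u^{t},v^{t})=C(u,v)^{t}$ for all $t>0$, is the right way to make the statement non-vacuous, and the skeleton is the standard one: homogeneity of $\ell(s,r)=-\log C(e^{-s},e^{-r})$, the definition $A(w)=\ell(1-w,w)$, the endpoint values $A(0)=A(1)=1$, the bound $A(t)\geq\max(t,1-t)$ from the Fr\'echet--Hoeffding upper bound, and the converse direction (smooth case, then approximation of a convex $A$ by smooth convex ones, copulas being closed under pointwise limits) all go through. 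Two details need repair, though. The bound $A\leq 1$ is justified circularly: Theorem \ref{relation_dep} yields PQD from $TP_2$, but the $TP_2$ property of extreme value copulas (for which the paper cites \citet{Hurlimann2003}) is itself proved \emph{from} the Pickands representation; the non-circular argument is $C(u,v)=C(u^{1/n},v^{1/n})^{n}\geq\max\bigl(u^{1/n}+v^{1/n}-1,0\bigr)^{n}\rightarrow uv$. Also, the mixed derivative does not exhibit ``a factor that is nonnegative precisely when $A''\geq 0$'': with $w=r/(s+r)$ one finds $\partial^{2}C/\partial u\partial v=\frac{C(u,v)}{uv}\bigl[(A-wA')(A+(1-w)A')+\tfrac{w(1-w)}{s+r}A''\bigr]$, a \emph{sum}; nonnegativity of $A''$ is necessary only because one may let $s+r\rightarrow 0$, where the second term dominates, and is sufficient only in combination with $A-wA'\geq 0$ and $A+(1-w)A'\geq 0$, which follow from the bounds on $A$.

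The genuine gap is the forward direction when $C$ is not twice differentiable. Max-stable copulas need not have densities: $\min(u,v)$, the Marshall--Olkin copulas, and the paper's own asymmetric logistic copulas \eqref{asymmetric logistic} all carry singular mass and have kinked dependence functions, so the differential argument does not apply to them, and neither of your fallbacks fills the hole. Approximating ``a convex $A$ by smooth convex functions'' presupposes the convexity you are trying to establish (it serves only the converse), and smoothing $C$ instead destroys max-stability; meanwhile the claim that rectangle masses are nonnegative ``exactly when $A$ is convex'' is an assertion, not an argument --- no single rectangle inequality is pointwise equivalent to a local convexity inequality for $A$. A correct elementary completion exists but needs an idea absent from your proposal: apply $2$-increasingness to the rectangles $[u_{1}^{t},u_{2}^{t}]\times[v_{1}^{t},v_{2}^{t}]$ and let $t\downarrow 0$; the first-order term in $t$ gives submodularity of $\ell$, namely $\ell(s_{1},r_{1})+\ell(s_{2},r_{2})\leq\ell(s_{1},r_{2})+\ell(s_{2},r_{1})$ for $s_{2}\leq s_{1}$, $r_{2}\leq r_{1}$, and submodularity combined with positive homogeneity and continuity forces convexity of $A$. (The alternative is the classical route through the exponent/spectral measure of max-infinitely divisible laws, which is the actual content of Pickands' theorem.) As it stands, the step you call the ``only genuinely computational'' one fails precisely on the copulas this paper is most interested in.
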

The function $A$ is referred to as the \textit{dependence function} or \textit{Pickands} function of the copula $C$.
\section{Introducing covariates in dependence models} \label{covariate}
\subsection{Description of the model}
Not all baseline survival d.f's $\overline{H}^{\scriptscriptstyle{0}}$ defines a model, so that $\lambda{_{\scriptscriptstyle{X}}^z}$ and $\lambda{_{\scriptscriptstyle{Y\left\vert X>x\right.}}^z}$ are the marginal and conditional specific cause hazards for some bivariate sdf $\overline{H}^{z}$ with margins $\overline{F}^{z}$ and $\overline{G}^{z}$ under a given covariate $z$. We conclude from the first equation of \eqref{Cox cond X>x} that $\overline{F}^{z}(x)=\left( {\overline{F}^{\scriptscriptstyle{0}}}(x)\right)^{\Phi (z)}$. By the second equation in \eqref{Cox cond X>x}, plugging $x=0$, we get $\overline{G}^{z}(y)=\left( \overline{G}^{\scriptscriptstyle{0}}(y)\right) ^{\Psi (z)}$. The model is defined when $z$ holds if $\left( {\overline{F}^{\scriptscriptstyle{0}}}(x)\right) ^{\Phi(z)}\left(\overline{H}{_{\scriptscriptstyle{Y|X>x}}^{\scriptscriptstyle{0}}}(y)\right)^{\Psi(z)}$ defines a sdf. Notice that
\begin{equation} \label{H bar z sdf}
\overline{H}^{z}(x,y)=\left(\overline{H}^{\scriptscriptstyle{0}}(x,y)\right)^{\Psi(z)}\left({\overline{F}^{\scriptscriptstyle{0}}}(x)\right)^{\Phi(z)-\Psi(z)}
\end{equation}
which is indeed a sdf when $\Phi(z)\geq \Psi(z)>0$ and $\left(\overline{H}^{\scriptscriptstyle{\scriptscriptstyle{0}}}(x,y)\right)^{\Psi (z)}$ is a sdf.
\newline
Also not all bivariate survival d.f's $\overline{H}^{\scriptscriptstyle{0}}$ are such that for all positive $\gamma$, $\left( \overline{H}^{\scriptscriptstyle{0}}\right) ^{\gamma}$ is a sdf. Min-infinite divisibility of the baseline hazard seems to be a natural assumption here. Assume therefore that:
\begin{equation}
H^{\scriptscriptstyle{0}} \; \text{is min-infinitely divisible} \tag{\textbf{H}} \label{H}
\end{equation}
By Theorem \ref{min-id TP2} and Lemma \ref{min inf div}, (\textbf{H}) holds iff $C_{\overline{H}^{\scriptscriptstyle{0}}}$ is $TP_2$. We have the following result.
\begin{proposition} \label{Prop Hbar sdf}
When (\ref{H}) holds then $\overline{H}^{z}$ defined in (\ref{H bar z sdf}) is a sdf for all $z$ such that $\Phi (z)\geq \Psi (z)>0$.
\end{proposition}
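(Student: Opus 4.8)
The plan is to exhibit an explicit random vector whose survival function equals $\overline{H}^{z}$ in \eqref{H bar z sdf}; producing such a vector is exactly what it means for $\overline{H}^{z}$ to be a sdf. Write $\alpha:=\Psi(z)$ and $\beta:=\Phi(z)-\Psi(z)$, so that $\overline{H}^{z}(x,y)=\big(\overline{H}^{0}(x,y)\big)^{\alpha}\big(\overline{F}^{0}(x)\big)^{\beta}$ with $\alpha>0$ and $\beta\geq 0$ under the standing assumption $\Phi(z)\geq\Psi(z)>0$.

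First I would use (\ref{H}). By the very definition of min-infinite divisibility, $\big(\overline{H}^{0}\big)^{\gamma}$ is a sdf for every $\gamma>0$; applying this with $\gamma=\alpha=\Psi(z)>0$ produces a random vector $(\widetilde X,\widetilde Y)$ with sdf $\big(\overline{H}^{0}\big)^{\alpha}$. (Equivalently, by Theorem \ref{min-id TP2} and Lemma \ref{min inf div}, (\ref{H}) is the statement that $C_{\overline{H}^{0}}$ is $TP_2$, which is the structural positivity underlying the construction.) Next, since $\overline{F}^{0}$ is a univariate survival function and $\beta\geq 0$, the power $\big(\overline{F}^{0}\big)^{\beta}$ is again a univariate survival function (a univariate law is always min-id, hence closed under positive powers; for $\beta=0$ it is the degenerate survival function equal to $1$). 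Let $V$ be a random variable with sdf $\big(\overline{F}^{0}\big)^{\beta}$, taken independent of $(\widetilde X,\widetilde Y)$.

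The decisive step is then to set $(X^{*},Y^{*}):=\big(\min(\widetilde X,V),\,\widetilde Y\big)$ and to compute, using the independence of $V$ from $(\widetilde X,\widetilde Y)$,
\begin{equation*}
\mathbb{P}(X^{*}>x,Y^{*}>y)=\mathbb{P}(\widetilde X>x,\widetilde Y>y)\,\mathbb{P}(V>x)=\big(\overline{H}^{0}(x,y)\big)^{\alpha}\big(\overline{F}^{0}(x)\big)^{\beta}=\overline{H}^{z}(x,y).
\end{equation*}
Thus $\overline{H}^{z}$ is the joint survival function of $(X^{*},Y^{*})$, hence a genuine sdf, which is the claim. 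As a consistency check the construction also returns the announced margins: the first coordinate satisfies $\mathbb{P}(X^{*}>x)=\big(\overline{F}^{0}(x)\big)^{\alpha+\beta}=\big(\overline{F}^{0}(x)\big)^{\Phi(z)}=\overline{F}^{z}(x)$ and the second $\mathbb{P}(Y^{*}>y)=\big(\overline{G}^{0}(y)\big)^{\Psi(z)}=\overline{G}^{z}(y)$.

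I expect the only real obstacle to be conceptual rather than computational: recognizing that the factor $\big(\overline{F}^{0}(x)\big)^{\beta}$ can be absorbed by taking a minimum with an independent variable acting only on the first coordinate, and being careful that the exponent $\beta=\Phi(z)-\Psi(z)$ is nonnegative (this is exactly where $\Phi(z)\geq\Psi(z)$ enters) while $\alpha=\Psi(z)$ is strictly positive (so that min-id applies to give the real power $\big(\overline{H}^{0}\big)^{\alpha}$). An alternative, purely analytic route would verify directly the survival rectangle inequality $\overline{H}^{z}(x_1,y_1)-\overline{H}^{z}(x_2,y_1)-\overline{H}^{z}(x_1,y_2)+\overline{H}^{z}(x_2,y_2)\geq 0$ for $x_1<x_2$, $y_1<y_2$ using the $TP_2$ form of (\ref{H}); but this is more laborious, and the probabilistic construction above is both shorter and more transparent.
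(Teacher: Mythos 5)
Your proof is correct, and it takes a genuinely different route from the paper's. The paper argues analytically: it checks the boundary limits of $\overline{H}^{z}$ and then verifies the rectangle inequality directly, writing $\Delta H$ as a difference of two terms, introducing $r(t)=\big(\overline{H}^{0}(t,b_{2})\big)^{\Psi(z)}-\big(\overline{H}^{0}(t,a_{2})\big)^{\Psi(z)}$, invoking Lemma 2.1.3 of Nelsen to get that $r$ is nondecreasing (because $(\overline{H}^{0})^{\Psi(z)}$ is $2$-increasing under (\ref{H})), and combining this with the monotonicity of $(\overline{F}^{0})^{\Phi(z)-\Psi(z)}$ and the sign of $r(b_{1})$ to conclude $\Delta H\geq 0$. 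You instead realize $\overline{H}^{z}$ as the joint survival function of the explicit vector $\big(\min(\widetilde X,V),\widetilde Y\big)$, where $(\widetilde X,\widetilde Y)$ carries the sdf $(\overline{H}^{0})^{\Psi(z)}$ supplied by min-infinite divisibility and $V$ is an independent variable with univariate survival function $(\overline{F}^{0})^{\Phi(z)-\Psi(z)}$. Your construction is shorter and makes the boundary conditions and the rectangle inequality automatic (they hold for any joint survival function of a genuine random vector), it recovers the margins $\overline{F}^{z}$ and $\overline{G}^{z}$ as a by-product, and it is very much in the spirit of the minimum-representation of min-id laws recalled in Section \ref{rappel}; the only point needing the small care you already give it is the degenerate exponent $\Phi(z)=\Psi(z)$, where $V$ is almost surely infinite (or, more simply, the claim is then immediate from (\ref{H}) alone). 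What the paper's analytic proof buys in exchange is the closing remark that when (\ref{H}) fails, $\overline{H}^{z}$ is still a sdf provided $\Phi(z)\geq\Psi(z)\geq 1$; your argument can also deliver this, but only after substituting for min-infinite divisibility the separate fact that powers $\geq 1$ of a bivariate sdf are sdf's.
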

Let us consider the case when $0< \Phi(z)\leq \Psi(z)$. Analogously with (\ref{Cox cond X>x}) the model may then be written
\begin{equation} {(\text{M}2)} \label{Cox Phi less Psi}
\left\{ 
\begin{array}{c}
\lambda_{\scriptscriptstyle{Y}}^{z}(y)=\lambda_{\scriptscriptstyle{Y}}^{\scriptscriptstyle{0}}(y)\Psi(z) \\ 
\lambda_{\scriptscriptstyle{X|Y>y}}^{z}(x)=\lambda_{\scriptscriptstyle{X|Y>y}}^{\scriptscriptstyle{0}}(x)\Phi(z)%
\end{array}%
\right.
\end{equation}
permuting the role of $X$ and $Y$. In a similar way to the above we have that 
\begin{equation*}
\overline{H}^{z}(x,y)=\left( \overline{H}^{\scriptscriptstyle{0}}(x,y)\right)^{\Phi (z)}\left(\overline{G}^{\scriptscriptstyle{0}}(y)\right)^{\Psi (z)-\Phi (z)}
\end{equation*}
is a proper sdf. To summarize the above arguments we state:\\
\newline Let the model be defined by \eqref{Cox cond X>x} if $\Phi (z)\geq \Psi (z)$ and by \eqref{Cox Phi less Psi} if $\Phi (z) < \Psi (z)$. Call (\textbf{M}) the model defined through

\begin{equation*}
(\textbf{M}):=(M1)\indicatrice_{\Phi (z)\geq \Psi (z)} +(M2)\indicatrice_{\Phi (z)<\Psi (z)}.
\end{equation*}
\newline This model is well defined, even if $\Phi(z)$ and $\Psi(z)$ are not ordered uniformly on the covariate $z$ (which can be multivariate); the functions $\Phi$ and $\Psi$ can be easily estimated through the data, since they characterize the marginal PH models in (\textbf{M}). Suppose that $X$ and $Y$ are fitted to the same scale under the baseline, namely $\overline{F}^{\scriptscriptstyle{0}}(t)=\overline{G}^{\scriptscriptstyle{0}}(t)$ for all $t$. Then $\Phi (z)\geq \Psi (z)$ implies $\overline{F}^{z}(t)\leq \overline{G}^{z}(t)$ for all $t$, stretching the fact that $X$ becomes stochastically smaller than $Y$ under the stress parameter $z$.\\
\newline Identifiability of (\textbf{M}) holds; assume for example that $\Phi(z)\geq \Psi (z)$. Then $\Phi(z)$ and $\Psi(z)$ are defined uniquely. Indeed, assume 
\begin{eqnarray*}
\overline{H}^{z}(x,y)&=&\left(\overline{H}^{\scriptscriptstyle{0}}(x,y)\right)^{\Psi(z)}\left(\overline{F}^{\scriptscriptstyle{0}}(x)\right)^{\Phi (z)-\Psi (z)} \\
&=& \left( \overline{H}^{\scriptscriptstyle{0}}(x,y)\right)^{\Psi^{\prime }(z)}\left(\overline{F}^{\scriptscriptstyle{0}}(x)\right)^{\Phi^{\prime }(z)-\Psi^{\prime }(z)},
\end{eqnarray*}
for all $x,y$. Then taking logarithms yields $\Phi (z)=\Phi ^{\prime }(z)$ and $\Psi(z)=\Psi ^{\prime }(z)$.\\
When (\ref{H}) holds then for all $z$ , $\overline{H}^{z}$ is a sdf and
\begin{eqnarray}
\overline{H}^{z}(x,y) &=&\indicatrice_{\Phi (z)\geq \Psi (z)} \left(\overline{H}^{\scriptscriptstyle{0}}(x,y)\right) ^{\Psi (z)}\left( \overline{F}^{\scriptscriptstyle{0}}(x)\right) ^{\Phi (z)-\Psi (z)}  \label{Hz w.r.t. F0} \\
&&+\indicatrice_{\Phi (z) < \Psi (z)} \left( \overline{H}^{\scriptscriptstyle{0}}(x,y)\right)^{\Phi (z)}\left( \overline{G}^{\scriptscriptstyle{0}}(y)\right) ^{\Psi(z)-\Phi(z)}.  \notag
\end{eqnarray}
Min-infinite divisibility of the baseline will also make any $H^{z}$ min-infinitely divisible, showing that this class is \textit{stable} under (\textbf{M}). Indeed for any positive $\gamma$,
\begin{eqnarray*}
\left(\overline{H}^{z}(x,y)\right)^{\gamma} &=&\indicatrice_{\Phi(z)\geq \Psi(z)} \left(\overline{H}^{\scriptscriptstyle{0}}(x,y)\right)^{\gamma\Psi(z)}\left(\overline{F}^{\scriptscriptstyle{0}}(x)\right)^{\gamma[\Phi(z)-\Psi(z)]} \\
&&+\indicatrice_{\Phi (z) < \Psi(z)} \left( \overline{H}^{\scriptscriptstyle{0}}(x,y)\right) ^{\gamma\Phi (z)}\left(\overline{G}^{\scriptscriptstyle{0}}(y)\right)^{\gamma[\Psi(z)-\Phi(z)]},
\end{eqnarray*}
which still is a sdf.
By Theorem \ref{min-id TP2} and Lemma \ref{min inf div} min-infinite divisibility is not a property of the cdf but of its copula. Formula (\ref{Hz w.r.t. F0}) can be written for copulas through
\begin{eqnarray}
C_{\overline{H}^{z}}(u,v) &=&\indicatrice_{\Phi (z)\geq \Psi (z)} u^{\frac{\Phi (z)-\Psi (z)}{\Phi (z)}}C_{\overline{H}^{\scriptscriptstyle{0}}}\left( u^{\frac{1}{\Phi (z)}},v^{\frac{1}{\Psi (z)}}\right) ^{\Psi (z)}
\label{CopHz w.r.t. CopF0} \\
&&+\indicatrice_{\Phi (z) < \Psi (z)} v^{\frac{\Psi (z)-\Phi (z)}{\Psi(z)}}C_{\overline{H}^{\scriptscriptstyle{0}}}\left( u^{\frac{1}{\Phi (z)}},v^{\frac{1}{\Psi (z)}}\right)^{\Phi (z)}.  \notag
\end{eqnarray}
(\ref{H}) is only a sufficient condition for the model to be defined. The following example illustrates this fact.
\begin{example}
Let $C_{\overline{H}^{\scriptscriptstyle{0}}}(u,v)=uv\exp \left\{-\theta \log u\log v\right\}$, with $\theta \in (0;1]$. This is the \textit{Gumbel-Barnett} family \citep[see][p.119]{Nelsen2006}. By (\ref{TP2_derivee}) it is easy to check that $C_{\overline{H}^{\scriptscriptstyle{0}}}$ is not $TP_2$. Using (\ref{CopHz w.r.t. CopF0}) and assuming $\Phi (z)\geq \Psi (z)$ we obtain 
\begin{equation*}
C_{\overline{H}^{z}}(u,v)=uv\exp \left\{ -\frac{\theta }{\Phi (z)}\log u\log v\right\}
\end{equation*}
which still is a Gumbel-Barnett copula when $\frac{\theta }{\Phi (z)}$ belongs to $(0;1]$.
\end{example}
This example shows that (\ref{H}) is indeed the only acceptable condition for existence. Otherwise the baseline hazard $\overline{H}^{\scriptscriptstyle{0}}$ defines a model only for specific values of the covariate. This motivates our interest in good classes of min-infinitely divisible copulas which we intend to regress on the covariate $z$.
\subsection{Stability properties of the model} \label{stability}
We introduce two classes of copulas which are stable under (\textbf{M}).
\subsubsection{Extended archimedean copulas} \label{archimedean}
Among all possible types of bivariate dependence which can be described through the present bivariate Cox model, there exists a class of copulas which contains the archimedean copulas and which enjoys peculiar stability properties.\\
\newline
From now on, denote
\begin{equation}\label{not1}
\alpha(z)=\min\left(\frac{\Psi(z)}{\Phi(z)},1\right)
\end{equation}
and
\begin{equation}\label{not2}
\beta(z)=\min\left(\frac{\Phi(z)}{\Psi(z)},1\right).
\end{equation}
Let $\varphi_{\scriptscriptstyle{0}}$ be a generator and $C_{\varphi_{\scriptscriptstyle{0}}}$ be the archimedean copula with generator $\varphi_{\scriptscriptstyle{0}}$. We assume that $C_{\varphi_{\scriptscriptstyle{0}}}$ is $TP_2$. When $\overline{H}^z$ is defined, set $C_{\overline{H}^z}$ its copula.
\begin{proposition} \label{Proposition archimedean copula}
Let $\overline{H}^{\scriptscriptstyle{0}}$ be a sdf with $TP_2$ copula $C_{\varphi_{\scriptscriptstyle{0}}}$. Then, $\overline{H}^z$ is defined for all $z$ in the domain of $\Phi$ and $\Psi$. Further 
\begin{equation}\label{prop archimedean}
C_{\overline{H}^{z}}(u,v)= u^{1-\alpha(z)}v^{1-\beta(z)}\varphi_z^{-1}\left(\varphi_z\left(u^{\alpha(z)}\right)+\varphi_z\left(v^{\beta(z)}\right)\right)
\end{equation}
with
\begin{equation} \label{phiz}
\varphi_z(t)=\varphi_{\scriptscriptstyle{0}}\left(t^{\frac{1}{\Phi(z)\alpha(z)}}\right) = \varphi_{\scriptscriptstyle{0}}\left(t^{\frac{1}{\Psi(z)\beta(z)}}\right).
\end{equation}
\end{proposition}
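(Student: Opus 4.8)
The plan is to substitute the Archimedean form of the baseline copula, $C_{\overline{H}^{\scriptscriptstyle 0}}=C_{\varphi_0}$ with $C_{\varphi_0}(s,t)=\varphi_0^{-1}(\varphi_0(s)+\varphi_0(t))$, directly into the master formula \eqref{CopHz w.r.t. CopF0} and to check that it collapses to \eqref{prop archimedean}. Existence of $\overline{H}^z$ for every admissible $z$ needs no separate argument: by hypothesis $C_{\varphi_0}$ is $TP_2$, so by Lemma \ref{min inf div} and Theorem \ref{min-id TP2} the baseline $H^{\scriptscriptstyle 0}$ is min-id, that is \eqref{H} holds, and Proposition \ref{Prop Hbar sdf} (together with its mirror image obtained by exchanging the roles of $X$ and $Y$, as noted after that proposition) guarantees that $\overline{H}^z$ is a genuine sdf whenever $\Phi(z)\geq\Psi(z)>0$ or $0<\Phi(z)<\Psi(z)$, i.e. for all $z$ in the common domain of $\Phi$ and $\Psi$. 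Since $\overline{H}^z$ is then a sdf, its survival copula $C_{\overline{H}^z}$ is automatically a copula, so I only have to verify the displayed identity and need not check independently that the right-hand side of \eqref{prop archimedean} is a copula.

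The computation rests on one bookkeeping identity. From \eqref{not1} and \eqref{not2},
\[
\Phi(z)\alpha(z)=\Phi(z)\min\Bigl(\tfrac{\Psi(z)}{\Phi(z)},1\Bigr)=\min(\Phi(z),\Psi(z))=\Psi(z)\min\Bigl(\tfrac{\Phi(z)}{\Psi(z)},1\Bigr)=\Psi(z)\beta(z).
\]
This simultaneously reconciles the two expressions for $\varphi_z$ in \eqref{phiz} and yields the compact form $\varphi_z(t)=\varphi_0\bigl(t^{1/m(z)}\bigr)$ with $m(z):=\min(\Phi(z),\Psi(z))$. Consequently $\varphi_z$ is strictly decreasing with strict inverse $\varphi_z^{-1}(s)=\bigl(\varphi_0^{-1}(s)\bigr)^{m(z)}$; here I use that a $TP_2$ Archimedean generator is strict (a nontrivial zero set would force a negative $2\times2$ determinant in the lower-left corner, contradicting $TP_2$), so $\varphi_0^{-1}$ is a genuine inverse and no pseudo-inverse issues arise.

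I would then split into the two regimes of \eqref{CopHz w.r.t. CopF0}. When $\Phi(z)\geq\Psi(z)$ one has $\beta(z)=1$ and $\alpha(z)=\Psi(z)/\Phi(z)$, hence $1-\beta(z)=0$ and $1-\alpha(z)=(\Phi(z)-\Psi(z))/\Phi(z)$; moreover $m(z)=\Psi(z)$, so $\varphi_z(u^{\alpha(z)})=\varphi_0(u^{1/\Phi(z)})$ and $\varphi_z(v)=\varphi_0(v^{1/\Psi(z)})$. Feeding these into \eqref{prop archimedean} and using $\varphi_z^{-1}(s)=(\varphi_0^{-1}(s))^{\Psi(z)}$ turns the Archimedean block into $C_{\varphi_0}(u^{1/\Phi(z)},v^{1/\Psi(z)})^{\Psi(z)}$, while the prefactor $u^{1-\alpha(z)}$ is exactly $u^{(\Phi(z)-\Psi(z))/\Phi(z)}$; this reproduces the first line of \eqref{CopHz w.r.t. CopF0}. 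The case $\Phi(z)<\Psi(z)$ is the mirror image: now $\alpha(z)=1$, $\beta(z)=\Phi(z)/\Psi(z)$, $m(z)=\Phi(z)$, and the same substitution recovers the second line of \eqref{CopHz w.r.t. CopF0}.

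The work is essentially routine; the only point demanding care is the exponent bookkeeping, and in particular the identity $\Phi(z)\alpha(z)=\Psi(z)\beta(z)=\min(\Phi(z),\Psi(z))$, which is precisely what fuses the two cases into the single symmetric expression \eqref{prop archimedean} and what makes the two definitions of $\varphi_z$ in \eqref{phiz} coincide. A secondary point is the justification of strictness of $\varphi_0$ (hence invertibility of $\varphi_z$), which I obtain from the assumed $TP_2$ property; were one to allow a non-strict generator, the pseudo-inverse $\varphi^{[-1]}$ would have to be carried through the computation, but the $TP_2$ hypothesis rules this out.
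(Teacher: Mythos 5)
Your proof is correct, and its computational core---substituting $C_{\varphi_0}(s,t)=\varphi_0^{-1}(\varphi_0(s)+\varphi_0(t))$ into \eqref{CopHz w.r.t. CopF0} and fusing the two regimes through the identity $\Phi(z)\alpha(z)=\Psi(z)\beta(z)=\min(\Phi(z),\Psi(z))$---is precisely the ``some calculus'' that the paper's own proof leaves implicit. You are in fact more careful than the paper on two points: the existence claim (the paper relies on the discussion in Section \ref{covariate}; you spell out the chain Lemma \ref{min inf div}, Theorem \ref{min-id TP2}, Proposition \ref{Prop Hbar sdf} and its mirror image), and the strictness of $\varphi_0$, which the paper never addresses even though it writes $\varphi_z^{-1}$ rather than a pseudo-inverse; your determinant argument that $TP_2$ rules out a nontrivial zero set is valid. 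The genuine difference lies in what you deliberately omit: the bulk of the paper's written proof is devoted to showing (i) that $\varphi_z$ of \eqref{phiz} is itself a legitimate Archimedean generator---$\varphi_z(1)=0$, strictly decreasing, and convex, the convexity coming from $\varphi_0'(t)+t\varphi_0''(t)\geq 0$, itself a consequence of the $TP_2$ hypothesis---and (ii) that $C_{\overline{H}^z}$ is again $TP_2$, as the product of the two $TP_2$ factors $(u,v)\mapsto u^av^b$ and $(u,v)\mapsto C_{\varphi_z}(u^a,v^b)$. You rightly observe that neither is needed for the identity \eqref{prop archimedean} as literally stated, since $C_{\overline{H}^z}$ is automatically a copula once $\overline{H}^z$ is a sdf. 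But these two facts are what turn the proposition into a stability result for the extended Archimedean class, and both are consumed downstream: Proposition \ref{Generalisation archimedean copula} refers to $C_{\varphi_z}$ as ``the archimedean copula with generator $\varphi_z$'', and its proof handles the $TP_2$ claim by saying it ``is proved as in Proposition \ref{Proposition archimedean copula}''. Note also that item (i) cannot be recovered from your reasoning: knowing that the product $u^{1-\alpha(z)}v^{1-\beta(z)}\varphi_z^{-1}\bigl(\varphi_z(u^{\alpha(z)})+\varphi_z(v^{\beta(z)})\bigr)$ is a copula does not imply that the Archimedean factor alone is one; that requires the convexity check. So your argument proves the statement as worded, but if it replaced the paper's proof, the stability claims of Section \ref{archimedean} and the proof of Proposition \ref{Generalisation archimedean copula} would be left without support.
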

More generally we have, denoting $\Pi(u,v)=uv$
\begin{proposition} \label{Generalisation archimedean copula}
Assume that 
\begin{equation}
C_{\overline{H}^{\scriptscriptstyle{0}}}(u,v) = \Pi(u^{1-\kappa},v^{1-\eta})C_{\varphi_{\scriptscriptstyle{0}}}(u^{\kappa},v^{\eta}), \, 0\leq \kappa, \eta \leq 1
\end{equation}
then
\begin{equation} \label{asymmetric copula}
C_{\overline{H}^z}(u,v) = \Pi(u^{1-\alpha(z)\kappa},v^{1-\beta(z)\eta})C_{\varphi_z}(u^{\alpha(z)\kappa},v^{\beta(z)\eta})
\end{equation}
where $C_{\varphi_z}$ denotes the archimedean copula with generator $\varphi_z$ defined in (\ref{phiz}).\\
\end{proposition}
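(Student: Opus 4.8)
The plan is to feed the assumed factorization of $C_{\overline{H}^{\scriptscriptstyle 0}}$ into the master formula (\ref{CopHz w.r.t. CopF0}) and to read off the exponents. This proposition contains Proposition \ref{Proposition archimedean copula} as the special case $\kappa=\eta=1$ (where $C_{\overline{H}^{\scriptscriptstyle 0}}=C_{\varphi_{\scriptscriptstyle 0}}$), so the argument is the same, simply carrying the extra exponents $\kappa,\eta$ along.

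First I would confirm that $\overline{H}^z$ is well defined, i.e.\ that (\ref{H}) holds. The assumed copula is the product of the independence copula evaluated at $(u^{1-\kappa},v^{1-\eta})$ and of $C_{\varphi_{\scriptscriptstyle 0}}$ evaluated at $(u^{\kappa},v^{\eta})$. Since the $TP_2$ property is preserved both under increasing reparametrization of each margin (here $u\mapsto u^{\kappa}$ and $v\mapsto v^{\eta}$) and under products of nonnegative functions, and since $\Pi$ is trivially $TP_2$, the standing assumption that $C_{\varphi_{\scriptscriptstyle 0}}$ is $TP_2$ makes $C_{\overline{H}^{\scriptscriptstyle 0}}$ itself $TP_2$. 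By Theorem \ref{min-id TP2} and Lemma \ref{min inf div}, (\ref{H}) holds and Proposition \ref{Prop Hbar sdf} guarantees that $\overline{H}^z$ is a sdf for every $z$ in the domain of $\Phi$ and $\Psi$.

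Next comes the computation, which I would carry out on the branch $\Phi(z)\geq\Psi(z)$, so that $\alpha(z)=\Psi(z)/\Phi(z)$ and $\beta(z)=1$ by (\ref{not1})--(\ref{not2}); the branch $\Phi(z)<\Psi(z)$ then follows by the symmetry $(u,\kappa,\Phi,\alpha)\leftrightarrow(v,\eta,\Psi,\beta)$ already built into the two lines of (\ref{CopHz w.r.t. CopF0}). Substituting $C_{\overline{H}^{\scriptscriptstyle 0}}(a,b)=a^{1-\kappa}b^{1-\eta}C_{\varphi_{\scriptscriptstyle 0}}(a^{\kappa},b^{\eta})$ at $a=u^{1/\Phi(z)}$, $b=v^{1/\Psi(z)}$ into the first line of (\ref{CopHz w.r.t. CopF0}) and raising to the power $\Psi(z)$, I would collect the pure powers of $u$ and $v$. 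The prefactor $u^{(\Phi(z)-\Psi(z))/\Phi(z)}=u^{1-\alpha(z)}$ combines with the factor $u^{(1-\kappa)\Psi(z)/\Phi(z)}=u^{(1-\kappa)\alpha(z)}$ coming from the independence part to give exactly $u^{1-\kappa\alpha(z)}$, while the power of $v$ is $v^{1-\eta}=v^{1-\eta\beta(z)}$ since $\beta(z)=1$.

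It then remains to identify the surviving archimedean factor $\big[C_{\varphi_{\scriptscriptstyle 0}}\big(u^{\kappa/\Phi(z)},v^{\eta/\Psi(z)}\big)\big]^{\Psi(z)}$ with $C_{\varphi_z}\big(u^{\alpha(z)\kappa},v^{\beta(z)\eta}\big)$, which is the crux of the proof. From the definition (\ref{phiz}), on this branch $\varphi_z(t)=\varphi_{\scriptscriptstyle 0}(t^{1/\Psi(z)})$, whence $\varphi_z^{-1}(s)=[\varphi_{\scriptscriptstyle 0}^{-1}(s)]^{\Psi(z)}$, and moreover $\varphi_z(u^{\alpha(z)\kappa})=\varphi_{\scriptscriptstyle 0}(u^{\kappa/\Phi(z)})$ because $\alpha(z)/\Psi(z)=1/\Phi(z)$, with the analogous identity for the $v$-argument. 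Writing $C_{\varphi_z}$ through its generator and substituting these three relations turns its right-hand side into $\big[\varphi_{\scriptscriptstyle 0}^{-1}\big(\varphi_{\scriptscriptstyle 0}(u^{\kappa/\Phi(z)})+\varphi_{\scriptscriptstyle 0}(v^{\eta/\Psi(z)})\big)\big]^{\Psi(z)}$, which is precisely the factor above. Combining it with the monomial prefactor yields (\ref{asymmetric copula}). The only delicate point is the exponent bookkeeping that forces the arguments of $\varphi_{\scriptscriptstyle 0}$ to agree on both sides; everything else is routine, and the reversed case is obtained verbatim after the symmetric relabelling.
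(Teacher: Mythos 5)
Your proposal is correct and follows essentially the same route as the paper: the paper's own proof consists of exactly the substitution of the factorized baseline copula into (\ref{CopHz w.r.t. CopF0}) (dismissed there as ``simple calculations'') together with the product-of-$TP_2$-functions argument borrowed from Proposition \ref{Proposition archimedean copula}. You merely spell out the exponent bookkeeping and the generator identity $\varphi_z^{-1}(s)=[\varphi_{\scriptscriptstyle 0}^{-1}(s)]^{\Psi(z)}$ that the paper leaves implicit, which is a faithful (and more explicit) rendering of the same argument.
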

\subsubsection{Extreme values copulas} \label{evc}
We show that the class of evc's also enjoys stability properties, as seen in the present Section. Extreme values copulas are $TP_2$ \citep{Hurlimann2003}.
\begin{proposition} \label{Proposition B fonction de A} 
When $\overline{H}^{\scriptscriptstyle{0}}$ has an evc $C_{\overline{H}^{\scriptscriptstyle{0}}}$ with Pickands function $A$ then, denoting $C_{\overline{H}^{z}}$ the copula of $\overline{H}^{z}$ and using \eqref{CopHz w.r.t. CopF0}, we have 
\begin{equation} \label{evc_z}
C_{\overline{H}^{z}}(u,v)=\exp \left[ \log (uv)B^{z}\left( \frac{\log v}{\log uv}\right) \right]
\end{equation}
with 
\begin{equation} \label{B dependence}
B^{z}(s)=1-W(z)K(z)-sW(z)[1-K(z)]+W(z)[(1-s)K(z)+s]A\left( \frac{s}{K(z)(1-s)+s}\right)
\end{equation}
where $K(z)=\frac{\Psi (z)}{\Phi (z)}\;\text{and}\;W(z)=\min \left( \frac{1}{K(z)},1\right)=\beta(z)$.
\end{proposition}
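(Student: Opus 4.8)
The plan is to insert the Pickands representation \eqref{Pickands} for $C_{\overline{H}^{\scriptscriptstyle{0}}}$ into the copula formula \eqref{CopHz w.r.t. CopF0} and to rearrange the resulting exponential into the canonical evc form \eqref{evc_z}, reading off $B^z$ by inspection. Throughout I write $\Phi=\Phi(z)$, $\Psi=\Psi(z)$, $K=K(z)=\Psi/\Phi$ and $W=W(z)=\min(1/K,1)$, and I introduce the variables $L=\log(uv)$ and $s=\frac{\log v}{\log(uv)}$, so that $\log u=(1-s)L$ and $\log v=sL$. The two indicator branches of \eqref{CopHz w.r.t. CopF0} are treated separately; since they are governed by the ordering of $\Phi$ and $\Psi$, the first branch has $K\leq 1$, $W=1$, and the second $K>1$, $W=1/K$.

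Consider first the branch $\Phi\geq\Psi$. Setting $a=u^{1/\Phi}$ and $b=v^{1/\Psi}$ in \eqref{Pickands} gives $\log(ab)=\frac{\log u}{\Phi}+\frac{\log v}{\Psi}$ and $\frac{\log b}{\log ab}=\frac{\log v}{K\log u+\log v}$. Raising $C_{\overline{H}^{\scriptscriptstyle{0}}}(a,b)$ to the power $\Psi$ and multiplying by the prefactor $u^{(\Phi-\Psi)/\Phi}=\exp[(1-K)\log u]$ collapses the exponent to
\begin{equation*}
(1-K)\log u+(K\log u+\log v)\,A\!\left(\frac{\log v}{K\log u+\log v}\right).
\end{equation*}
Substituting $\log u=(1-s)L$ and $\log v=sL$, the common factor $L$ factors out of every term, the argument of $A$ becomes $\frac{s}{K(1-s)+s}$, and what remains is exactly $B^z(s)$ as in \eqref{B dependence} with $W=1$. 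The branch $\Phi<\Psi$ is handled identically with the roles of the margins exchanged: the prefactor is now $v^{(\Psi-\Phi)/\Psi}=\exp[(1-W)\log v]$ and the power is $\Phi$, which after the same substitution reproduces \eqref{B dependence} with $W=1/K$. This establishes \eqref{evc_z}--\eqref{B dependence}.

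It remains to confirm that $C_{\overline{H}^z}$ is genuinely an evc, equivalently that $B^z$ is a Pickands function. Rather than verifying convexity of $B^z$ directly---which is awkward because $B^z$ is $A$ precomposed with the fractional-linear map $s\mapsto s/(K(1-s)+s)$ and weighted by a further affine factor---I would argue by max-stability. An evc is characterised among copulas by $C(u^{\gamma},v^{\gamma})=C(u,v)^{\gamma}$ for all $\gamma>0$; applying this to $C_{\overline{H}^{\scriptscriptstyle{0}}}$ inside \eqref{CopHz w.r.t. CopF0}, and using that the prefactors $u^{(\Phi-\Psi)/\Phi}$ and $v^{(\Psi-\Phi)/\Psi}$ are themselves multiplicative under $u\mapsto u^{\gamma}$, $v\mapsto v^{\gamma}$, one checks directly that $C_{\overline{H}^z}(u^{\gamma},v^{\gamma})=C_{\overline{H}^z}(u,v)^{\gamma}$. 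Hence $C_{\overline{H}^z}$ is max-stable, so it is an evc, and by uniqueness of the Pickands representation the function $B^z$ produced above is precisely its Pickands function; in particular $B^z$ inherits convexity, the boundary values $B^z(0)=B^z(1)=1$ (which also follow at once from $A(0)=A(1)=1$), and the bounds $\max(s,1-s)\leq B^z(s)\leq 1$. The main obstacle is the reparametrization bookkeeping across the two branches and, if one insists on a self-contained verification, the convexity of $B^z$; the max-stability route is what makes the argument clean.
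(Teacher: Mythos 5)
Your derivation of \eqref{evc_z}--\eqref{B dependence} is correct and is the same computation as the paper's: substitute the Pickands form of $C_{\overline{H}^{\scriptscriptstyle{0}}}$ into \eqref{CopHz w.r.t. CopF0}, treat the two branches according to the sign of $\Phi(z)-\Psi(z)$, and factor $\log(uv)$ out of the exponent; your change of variables $\log u=(1-s)L$, $\log v=sL$ is the paper's substitution $u=v^{1/s-1}$ in disguise. The genuine divergence is in certifying that $B^{z}$ is a Pickands function. The paper does it by hand: $B^{z}(0)=B^{z}(1)=1$ from $A(0)=A(1)=1$, the bounds $\max(s,1-s)\leq B^{z}(s)\leq 1$ from the corresponding bounds on $A$, and convexity from the explicit formula $\frac{\partial^{2}B^{z}}{\partial s^{2}}(s)=\frac{W(z)K^{2}(z)}{[(1-s)K(z)+s]^{3}}\,A''\bigl(\frac{s}{(1-s)K(z)+s}\bigr)\geq 0$. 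You instead verify max-stability of $C_{\overline{H}^{z}}$ and invoke the characterization of evc's as max-stable copulas. This is sound, with two caveats. First, the direction you need --- max-stable copula $\Rightarrow$ representation \eqref{Pickands} with convex $A$ satisfying the bounds --- is exactly the nontrivial half of Pickands' theorem; since the paper deliberately defines evc's through the representation ``without further reference to the theory of bivariate extremes'', that equivalence is nowhere stated or proved in the paper and must be imported from the extreme-value literature (it is standard; see e.g. \citet{Joe1997}, Chapter 6). In effect the convexity of $B^{z}$, which is the real content of the verification, is outsourced to that theorem. Second, your characterization applies ``among copulas'', so you need as input that $C_{\overline{H}^{z}}$ is a copula; this holds because evc's are $TP_2$ (quoted in Section \ref{evc}) so that Proposition \ref{Prop Hbar sdf} applies, but you leave this implicit, whereas the paper's route delivers copula-ness as a conclusion (easy direction of Theorem \ref{Pickands theo}) rather than consuming it as a hypothesis. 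On the other side of the ledger, your argument is calculus-free and does not require $A$ to be twice differentiable, while the paper's convexity computation silently assumes $A\in\mathcal{C}^{2}$; for merely convex $A$ it would need a smoothness-free replacement (e.g. convexity of the perspective $(x,y)\mapsto yA(x/y)$ composed with the affine maps $s\mapsto s$ and $s\mapsto (1-s)K(z)+s$). Net assessment: identical first half, correct but genuinely different second half, trading the paper's elementary (and smoothness-assuming) verification for an appeal to a deeper external theorem.
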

\begin{remark}
This basic result shows that $C_{\overline{H}^{z}}$ is an evc with Pickands function $B^{z}$. Proposition \ref{Proposition B fonction de A} shows that the class of evc's\ is \textit{stable} under (\textbf{M}). Although the copula of $\overline{H}^{\scriptscriptstyle{0}}$ is an evc, this does not imply in any respect that its marginals should be extreme value sdf's.
\end{remark}
From (\ref{B dependence}) we deduce the \textit{transition formula} which links $B^{z^{\prime }}$ to $B^{z}$ for two different values of the covariate. It holds
\begin{proposition} \label{Proposition transition B}
Under (\textbf{M}) let $\overline{H}^{\scriptscriptstyle{0}}$ has an evc. Then with the above notation, for all $z$, $z^\prime$,
\begin{equation} \label{transition Bz'_Bz}
B^{z^{\prime }}(s)=1 - \frac{\alpha (z^{\prime })}{\alpha (z)} + \left(\frac{\alpha (z^{\prime })}{\alpha (z)} - \frac{\beta (z^{\prime })}{\beta (z)}\right)s + \left[ \left( 1-s\right) \frac{\alpha (z^{\prime })}{\alpha (z)}+s\frac{\beta (z^{\prime })}{\beta (z)}\right] B^{z}\left( \frac{s\frac{\beta (z^{\prime })}{\beta (z)}}{\left(1-s\right) \frac{\alpha (z^{\prime })}{\alpha(z)}+s\frac{\beta (z^{\prime })}{\beta (z)}}\right)
\end{equation}
\end{proposition}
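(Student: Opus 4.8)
The plan is to eliminate the baseline Pickands function $A$ between the two representations supplied by Proposition \ref{Proposition B fonction de A}. Writing formula (\ref{B dependence}) once for $z$ and once for $z'$ expresses both $B^z$ and $B^{z'}$ as explicit transforms of the \emph{same} function $A$; inverting the first relation to recover $A$ from $B^z$, and substituting into the second, will produce a relation between $B^{z'}$ and $B^z$ alone. First I would rewrite (\ref{B dependence}) in the variables $\alpha$ and $\beta$ of (\ref{not1})--(\ref{not2}), using the elementary identities $W(z)=\beta(z)$, $K(z)=\alpha(z)/\beta(z)$ and $W(z)K(z)=\alpha(z)$, each checked by splitting on whether $K(z)\le 1$ or $K(z)>1$. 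With the abbreviation $\tau_z(s):=s\beta(z)/\bigl(\alpha(z)(1-s)+s\beta(z)\bigr)$ this recasts Proposition \ref{Proposition B fonction de A} in the clean form
\[
B^z(s)=1-\alpha(z)+s\bigl(\alpha(z)-\beta(z)\bigr)+\bigl[(1-s)\alpha(z)+s\beta(z)\bigr]A\bigl(\tau_z(s)\bigr).
\]

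Solving this identity for $A$ gives $A(\tau_z(s))$ as an affine function of $B^z(s)$ divided by the weight $(1-s)\alpha(z)+s\beta(z)$. The key technical step is then a change of variable: the reparametrizations $\tau_z$ are M\"obius maps of $[0,1]$ obeying the composition law $\tau_z\circ\tau_{\alpha(z')/\alpha(z),\,\beta(z')/\beta(z)}=\tau_{z'}$, where I write $\tau_{a,b}(s):=sb/(a(1-s)+sb)$. Consequently, setting
\[
s^\ast:=\tau_{\alpha(z')/\alpha(z),\,\beta(z')/\beta(z)}(s)=\frac{s\,\beta(z')/\beta(z)}{(1-s)\,\alpha(z')/\alpha(z)+s\,\beta(z')/\beta(z)},
\]
one gets $\tau_z(s^\ast)=\tau_{z'}(s)$, so the value of $A$ at the argument occurring in the $B^{z'}$ formula equals $A(\tau_z(s^\ast))$, which the inverted relation expresses through $B^z(s^\ast)$. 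Note that $s^\ast$ is exactly the argument of $B^z$ appearing in (\ref{transition Bz'_Bz}).

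It then remains to substitute and simplify. Writing $E:=(1-s)\alpha(z')/\alpha(z)+s\beta(z')/\beta(z)$, the one identity that drives the whole reduction is $(1-s^\ast)\alpha(z)+s^\ast\beta(z)=\bigl[(1-s)\alpha(z')+s\beta(z')\bigr]/E$, which follows at once from the definition of $s^\ast$ since $1-s^\ast=(1-s)(\alpha(z')/\alpha(z))/E$. This collapses the coefficient of $B^z(s^\ast)$, namely $\bigl[(1-s)\alpha(z')+s\beta(z')\bigr]/\bigl[(1-s^\ast)\alpha(z)+s^\ast\beta(z)\bigr]$, exactly to $E$, which is the bracket in (\ref{transition Bz'_Bz}). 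The remaining affine terms, after using $Es^\ast=s\beta(z')/\beta(z)$ and $E\alpha(z)=(1-s)\alpha(z')+s\beta(z')\alpha(z)/\beta(z)$, telescope to $1-E=1-\alpha(z')/\alpha(z)+\bigl(\alpha(z')/\alpha(z)-\beta(z')/\beta(z)\bigr)s$, which is precisely the affine part of (\ref{transition Bz'_Bz}).

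I expect the only real obstacle to be this final bookkeeping: tracking the affine correction and the weight factor through the substitution, where several terms cancel only once the definitions of $s^\ast$ and $E$ are inserted. Conceptually the computation is transparent once one observes that $A\mapsto B^z$ is the transform $T_{\alpha(z),\beta(z)}$ defined by the displayed formula and that these transforms obey the group law $T_{\alpha',\beta'}\circ T_{\alpha,\beta}=T_{\alpha'\alpha,\,\beta'\beta}$; the statement is then merely $T_{\alpha(z'),\beta(z')}=T_{\alpha(z')/\alpha(z),\,\beta(z')/\beta(z)}\circ T_{\alpha(z),\beta(z)}$ together with the invertibility $T_{\alpha(z),\beta(z)}^{-1}=T_{1/\alpha(z),\,1/\beta(z)}$. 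Verifying the composition law separately for $\tau$, for the weight, and for the affine part is an equivalent, and arguably cleaner, route to the same conclusion.
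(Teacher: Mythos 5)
Your proof is correct and takes essentially the same route as the paper's: both write $B^{z}$ and $B^{z'}$ as transforms of the same Pickands function $A$ via \eqref{B dependence}, invert the $z$-relation to recover $A$, and use the composition property of the reparametrization $s\mapsto s^{*}$ (your M\"obius map identity $\tau_z(s^{*})=\tau_{z'}(s)$, which is exactly the paper's rewriting of the argument of $A$) to substitute $B^{z}(s^{*})$ and collapse the affine terms to $1-E$. The only difference is notational: you work in the $(\alpha,\beta)$ parametrization and organize the bookkeeping as a group law of the transforms $T_{\alpha,\beta}$, whereas the paper performs the same elimination in the $(W,K)$ variables.
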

Proposition \ref{Proposition transition B} proves that the transition from $z$ to $z^{\prime}$ is independent of the baseline dependence function. Formula \eqref{transition Bz'_Bz} can be seen as a kind of expression of the proportional hazard property, which links two hazard rates independently on the baseline.
When the covariate acts equally on $X$ and $Y$, i.e. $\Phi (z)=\Psi (z)$ for all $z$, then $B^{z}(s)=A(s)$ for all values of $s$ as seen in Proposition \ref{Proposition B fonction de A}. Thus, the copula of $\overline{H}^{z}$ equals that of the baseline $\overline{H}^{\scriptscriptstyle{0}}$; the dependency structure of $X$ and $Y$ should not be altered through (\textbf{M}). Only the marginal distributions of $X$ and $Y$ in this case reflect the role of the covariate.\\
\newline
We propose some illustration. We use $\Phi (z)=e^{\alpha z}$, with $\alpha=1.5$ and $\Psi(z)=e^{\beta z}$, with $\beta=2$. Figure \ref{evolution Clayton sym} illustrates formula \eqref{prop archimedean}. We represent the change of the density of $C_{\overline{H}^z}$ with $z$. The archimedean copula is the Clayton copula whose generator is defined by $\varphi(t)=t^{-\theta}-1$. We take $\theta=3$. In this figure the model tends rapidely to independence since the density of the copula tends to $1$ as $z$ increases. Figure \ref{transition_AB} illustrates the transition formula \eqref{B dependence}. The baseline copula is the Gumbel copula with $\theta =3$. The Pickands function of the Gumbel copula is $A(t)=\left[t^\theta + (1-t)^\theta\right]^{\frac{1}{\theta}}$. The dependence functions are ordered wrt $z$. As $z$ increases, the model tends to independent marginals.
\begin{figure}[!t]
\centering
\includegraphics[scale=0.44]{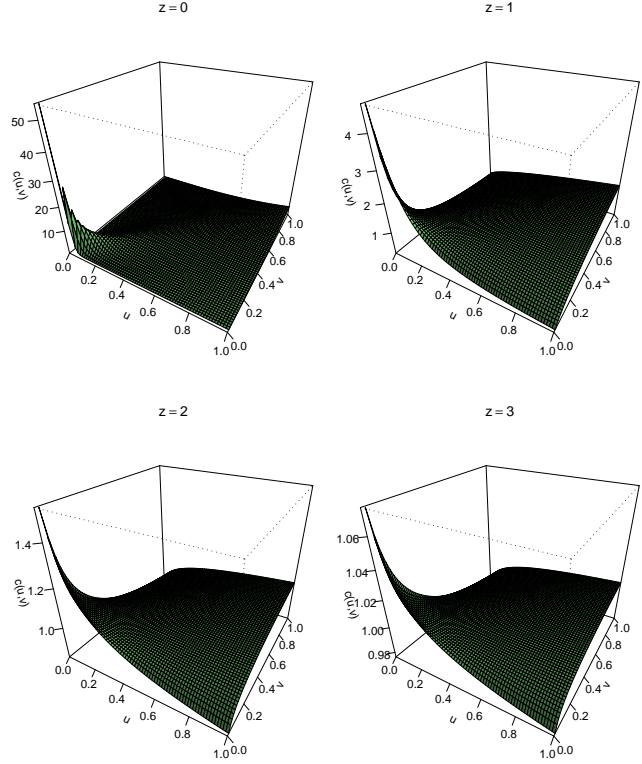}
\caption{Illustration of \eqref{asymmetric copula} with the Clayton copula density for the baseline ($z=0$)}
\label{evolution Clayton sym}
\end{figure}
\newpage
\begin{figure}[!t]
\centering
\includegraphics[height=10cm,width=10cm]{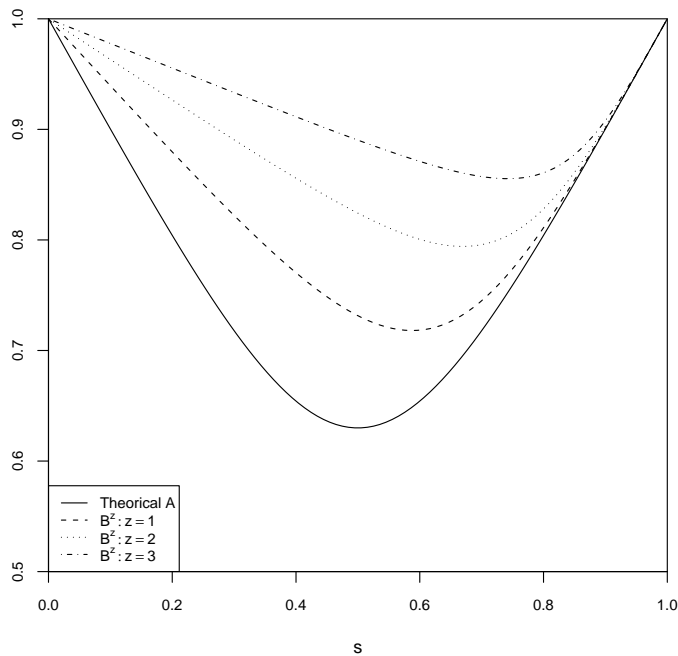}
\caption{Illustration of formula \eqref{B dependence}}
\label{transition_AB}
\end{figure}
\section{Asymmetric logistic models of dependence} \label{logistic}
This section deals with specific parametric models for dependence which are stable under (\textbf{M}). We consider model (\textbf{M}) specialized in the case when the copula of $\overline{H}^{\scriptscriptstyle{0}}$ is a Gumbel copula.\ The margins of $\overline{H}^{\scriptscriptstyle{0}}$ can be any. It is a simple parametrized model of copulas, which is an evc on one hand, and which models frailty bivariate dependence, being hence an archimedean copula. Indeed it is the only copula satisfying jointly these two properties (see \citet{Nelsen2006}, Theorem 4.5.2; \citet{Genest1989}, statement A).

The Gumbel copula writes
\begin{equation*}
C(u,v)\text{:=exp}\left[ -\left\{ \left( -\log u\right) ^{\theta }+\left(-\log v\right) ^{\theta }\right\} ^{1/\theta }\right]
\end{equation*}
with $\theta \geq 1.$ The dependence function of this copula is 
\begin{equation} \label{A_Gumbel}
A(s)=\left[ s^{\theta }+\left( 1-s\right) ^{\theta }\right] ^{1/\theta }.
\end{equation}
Assume that $\overline{H}^{\scriptscriptstyle{0}}$ has an evc with dependence function $A.$ When the covariate $z$ acts, the dependence function $B^{z}$ defined through Proposition \ref{Proposition B fonction de A} determines the \textit{asymmetric logistic }copula. This copula has three parameters $\alpha (z),\beta (z)$ and $\theta$.
Recall from (\ref{not1}), (\ref{not2}) and proposition \ref{Proposition B fonction de A} that 
\begin{equation*}
\alpha (z)=\min \left( \frac{\Psi (z)}{\Phi (z)},1\right)
\end{equation*}
and 
\begin{equation*}
\beta (z)=\min \left( \frac{\Phi (z)}{\Psi (z)},1\right).
\end{equation*}
It holds
\begin{equation} \label{Gumbel_dep}
B^{z}(s)=1-\alpha (z)+\left[ \alpha (z)-\beta (z)\right]s +\left[ \alpha(z)^{\theta }\left( 1-s\right) ^{\theta }+\beta (z)^{\theta }s^{\theta}\right] ^{1/\theta }.
\end{equation}
When $\alpha (z)=\beta (z)$ (which implies that they equal $1$), i.e. when $z$ acts equally on $X$ and $Y,$ $B^{z}(s)=A(s)$ for all $s$.
The copula of $\overline{H}^{z}$ is
\begin{equation} \label{asymmetric logistic}
C_{\overline{H}^{z}}(u,v)=\Pi (u^{1-\alpha (z)},v^{1-\beta (z)})C(u^{\alpha
(z)},v^{\beta (z)}).
\end{equation}
where $\Pi (u,v):=uv$ is the \textit{product copula} \citep[see][p.11]{Nelsen2006}.
The dependence function $B^{z}$ is an asymmetric form of the Gumbel dependence function $A$ defined in (\ref{A_Gumbel}). This is the \textit{asymmetric logistic model }in \citet{Tawn1988} when the margins are standard exponential. As developped by Khoudraji \citep[see][chap~4]{Khoudraji95} and Genest et al. in \citet{Discussion_Genest}, Proposition 3, given two dependence functions $A_{1}$ and $A_{2}$ , two constants $\kappa $ and $\eta $ with $0<\kappa ,\eta <1$ the function defined through
\begin{equation*}
B(s):=\left( \kappa s+\eta \bar{s}\right) A_{1}\left( \frac{\kappa s}{\kappa s+\eta \bar{s}}\right) +\left( \bar{\kappa }s+\bar{\eta }\bar{s}\right) A_{2}\left( \frac{\bar{\kappa }s}{\bar{\kappa }s+\bar{\eta}\bar{s}}\right)
\end{equation*}
where $\bar{s}$ denotes $1-s$, is the dependence function of the extreme value copula defined by $C_{A_{1}}(u^{1-\kappa}, v^{1-\eta })C_{A_{2}}(u^{\kappa },v^{\eta })$. Genest et al. define this procedure as a technique to generate asymmetric copulas. The class of copulas defined in (\ref{asymmetric logistic}) has been introduced by Genest et al (see their Proposition 2 in \citet{Discussion_Genest}).\\
\newline We now analyze this class of copulas in terms of frailty models. The Gumbel copula is associated with a frailty model of order 1, namely
\begin{equation*}
C(u,v)=\Lambda ^{-1}\left( \Lambda (u)+\Lambda (v)\right)
\end{equation*}
where $\Lambda^{-1}(s)=\int_{0}^{\infty }e^{-sw}dM_{1/\theta }(w) = e^{-s^{1/\theta}},\, \theta >1$, is the Laplace transform of the positive stable law $M_{1/\theta }$ on $\mathbb{R}^{+}$ with tail heaviness index $1/\theta $, location parameter $0$, scale parameter $1$ and skewness parameter $0$ (see \citet{Ravishanker} and the  example 5 in \citet{Oakes1989}). Denote $W$ a positive random variable with cdf $M_{1/\theta }.$ A bivariate sdf $\overline{H}^{\scriptscriptstyle{0}}$ with Gumbel copula $C$ writes
\begin{equation*}
\overline{H}^{\scriptscriptstyle{0}}(x,y)=\int_{0}^{\infty}\left\{\overline{F}(x)\overline{G}(y)\right\}^{w}dM_{1/\theta }(w),
\end{equation*}
for some sdf $\overline{F}$ and $\overline{G}$. Therefore $\overline{H}^{\scriptscriptstyle{0}}$ is a frailty bivariate sdf, with stable frailty measure and margins $\int_{0}^{\infty}{\overline{F}(x)}^{w}dM_{1/\theta }(w)$ and $\int_{0}^{\infty}{\overline{G}(y)}^{w}dM_{1/\theta }(w)$.\\
Let $U_{1}$ and $U_{2}$ be two independent r.v's, both independent of $W$. We assume that $U_{1}$ and $U_{2}$ have a positive stable law $M^{1}$ and $M^{2}$ with tail heaviness index $1/\theta$. The r.v $U_{1}$ (resp. $U_{2}$) has shape parameter $\frac{1}{\alpha (z)}-1$ (resp. $\frac{1}{\beta (z)}-1$). Both have location and skewness parameters $0$.
Define $S_{i}:=U_{i}+W,i=1,2$ \citep[see][]{Drouet}. Denote $\varphi _{i}^{-1}(s)$ the Laplace transforms of the distribution of $S_{i}$. Denote further $\psi _{i}^{-1}(s)$ the Laplace transform of the distribution of $U_{i}$. Let $M$ denote the probability measure of $(S_{1}$ $,S_{2})$. For arbitrary sdf $\overline{H}_1$ and $\overline{H}_2$ define the bivariate sdf
\begin{equation*}
\overline{H}(x,y):=\int \int \left\{\overline{H}_1(x)\right\}^{s_{1}}\left\{\overline{H}_2(y)\right\}^{s_{2}}dM(s_{1},s_{2})
\end{equation*}
which we call a frailty model of order 2 since it implies a bivariate latent variable. Frailty models of order two have been considered in \citet{Marshall1988} (see their formula (2.2)). The marginals of $\overline{H}$ are $\overline{F}(x) = \int_{0}^{\infty} \left\{\overline{H}_1(x)\right\}^{s_{1}}dM_{S_{1}}(s_{1})$ and $\overline{G}(y) = \int_{0}^{\infty} \left\{\overline{H}_2(y)\right\}^{s_{2}}dM_{S_{2}}(s_{2})$. \\
We prove that the copula of $\overline{H}$ is (\ref{asymmetric logistic}). Indeed
\begin{equation*}
\overline{H}(x,y)=\int \left\{\overline{H}_1(x)\right\}^{u_{1}}dM^{1}(u_{1})\int \left\{\overline{H}_2(y)\right\} ^{u_{2}}dM^{2}(u_{2})\int \left\{\overline{H}_1(x)\overline{H}_2(y)\right\}^{w}dM_{1/\theta}(w).
\end{equation*}
Introducing the Laplace transforms defined above and rewriting the marginals $\overline{F}(x)=\varphi _{1}^{-1}\left( -\log \overline{H}_1(x)\right)$ and $\overline{G}(y)=\varphi _{2}^{-1}\left( -\log \overline{H}_2(y)\right)$ we obtain the following expression for the copula of $\overline{H}$ 
\begin{equation} \label{copule H}
C(u,v)=\psi _{1}^{-1}\left( \varphi _{1}(u)\right) \psi _{2}^{-1}\left(
\varphi _{2}(v)\right) \Lambda ^{-1}\left( \varphi _{1}(u)+\varphi
_{2}(v)\right),
\end{equation}
since $\overline{H}(x,y) = C(\overline{F}(x),\overline{G}(y))$. Substituting $\psi _{i}$ and $\varphi _{i},i=1,2$ by their expressions in the above expression, noting that $\varphi _{i}^{-1}=$ $\psi _{i}^{-1}\Lambda^{-1},$ (\ref{copule H}) coincides with (\ref{asymmetric logistic}). We now prove that for an adequate choice of $\overline{H}_1$ and $\overline{H}_2$ the bivariate sdf $\overline{H}$ has same marginals as $\overline{H}^{z}$. Indeed let
\begin{eqnarray*}
\overline{H}_1(x) &:&=\exp \left[ -\left( -\min \left( \Phi (z),\Psi(z)\right) \log \overline{F}^{\scriptscriptstyle{0}}(x)\right) ^{\theta }\right] \\
\overline{H}_2(y) &:&=\exp \left[ -\left( -\min \left( \Phi (z),\Psi(z)\right) \log \overline{G}^{\scriptscriptstyle{0}}(y)\right) ^{\theta }\right]
\end{eqnarray*}
which yields $\overline{F}(t)=\overline{F}^{z}(t)$ and $\overline{G}(t)=\overline{G}^{z}(t)$ for all $t$. Therefore $\overline{H}$ and $\overline{H}^{z}$ coincide. We have proved
\begin{proposition}
When $\overline{H}^{\scriptscriptstyle{0}}$ is a frailty bivariate sdf with Gumbel copula, then for all $z$, $\overline{H}^{z}$ is a frailty sdf of order 2 with asymmetric logistic copula given in (\ref{asymmetric logistic}).
\end{proposition}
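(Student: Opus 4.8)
The plan is to realize $\overline{H}^{z}$ explicitly as a frailty model of order $2$ and then to identify its copula with the asymmetric logistic copula in (\ref{asymmetric logistic}). Since a bivariate sdf is determined by its margins together with its survival copula, it suffices to exhibit one order-$2$ frailty sdf $\overline{H}$ whose copula is (\ref{asymmetric logistic}) and whose margins coincide with $\overline{F}^{z}$ and $\overline{G}^{z}$; uniqueness in Sklar's theorem then forces $\overline{H}=\overline{H}^{z}$.

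First I would start from the order-$1$ frailty representation of the Gumbel copula, in which the frailty $W$ is positive stable with Laplace transform $\Lambda^{-1}(s)=e^{-s^{1/\theta}}$, so that $\overline{H}^{0}(x,y)=\int_{0}^{\infty}\{\overline{F}(x)\overline{G}(y)\}^{w}\,dM_{1/\theta}(w)$. To break the symmetry inherent in the Gumbel copula I would introduce two further independent positive stable frailties $U_{1},U_{2}$, both independent of $W$ and both of stability index $1/\theta$, and form the bivariate latent vector $(S_{1},S_{2})=(U_{1}+W,U_{2}+W)$. The key modelling choice is to calibrate the scale (shape) parameters of $U_{1}$ and $U_{2}$ to the values $\tfrac{1}{\alpha(z)}-1$ and $\tfrac{1}{\beta(z)}-1$ coming from (\ref{not1})--(\ref{not2}); this is precisely what will make the later Laplace-transform computation collapse to the desired power factors.

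Next I would compute the copula of the order-$2$ frailty model $\overline{H}(x,y)=\iint\{\overline{H}_1(x)\}^{s_{1}}\{\overline{H}_2(y)\}^{s_{2}}\,dM(s_{1},s_{2})$. Using $S_{i}=U_{i}+W$ and the independence of $U_{i}$ and $W$, the integral factors into the product of the two univariate $U_{i}$-integrals and the single $W$-integral, and the convolution property of Laplace transforms gives $\varphi_{i}^{-1}=\psi_{i}^{-1}\Lambda^{-1}$, where $\varphi_{i}^{-1}$ and $\psi_{i}^{-1}$ are the Laplace transforms of $S_{i}$ and $U_{i}$. Writing the margins as $\overline{F}=\varphi_{1}^{-1}(-\log\overline{H}_1)$ and $\overline{G}=\varphi_{2}^{-1}(-\log\overline{H}_2)$ and inverting yields the copula formula (\ref{copule H}). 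Substituting the explicit stable Laplace transforms, the chosen scale parameters make $\psi_{i}^{-1}(\varphi_{i}(u))$ reduce to the power factors $u^{1-\alpha(z)}$ and $v^{1-\beta(z)}$, while $\Lambda^{-1}(\varphi_{1}(u)+\varphi_{2}(v))$ produces the Gumbel factor $C(u^{\alpha(z)},v^{\beta(z)})$; together these give exactly (\ref{asymmetric logistic}).

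Finally I would calibrate the base sdf's, taking $\overline{H}_1(x)=\exp[-(-\min(\Phi(z),\Psi(z))\log\overline{F}^{0}(x))^{\theta}]$ and the analogous $\overline{H}_2$, and check through the explicit form of $\varphi_{i}^{-1}$ that the resulting margins are $\overline{F}(x)=(\overline{F}^{0}(x))^{\Phi(z)}=\overline{F}^{z}(x)$ and likewise $\overline{G}=\overline{G}^{z}$. The identity behind this is $\tfrac{1}{\alpha(z)}\min(\Phi(z),\Psi(z))=\Phi(z)$, which must be verified in the two regimes $\Psi\le\Phi$ and $\Psi>\Phi$ separately. With matching margins, and since $\overline{H}^{z}$ is already known from this section to carry the asymmetric logistic copula (\ref{asymmetric logistic}), both $\overline{H}$ and $\overline{H}^{z}$ share the same margins and the same copula, so $\overline{H}=\overline{H}^{z}$, completing the proof. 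I expect the main obstacle to be the Laplace-transform bookkeeping: justifying that the sum of independent stable frailties multiplies Laplace transforms, selecting the scale parameters so that $\psi_{i}^{-1}\circ\varphi_{i}$ is exactly the power map, and carrying the $\min(\Phi,\Psi)$ case distinction through the marginal computation without sign or exponent errors.
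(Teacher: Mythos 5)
Your proposal is correct and follows essentially the same route as the paper: the same order-$2$ frailty construction $(S_{1},S_{2})=(U_{1}+W,U_{2}+W)$ with positive stable $U_{i}$ of shape parameters $\tfrac{1}{\alpha(z)}-1$ and $\tfrac{1}{\beta(z)}-1$, the same Laplace-transform factorization $\varphi_{i}^{-1}=\psi_{i}^{-1}\Lambda^{-1}$ leading to (\ref{copule H}) and hence to (\ref{asymmetric logistic}), and the same calibration $\overline{H}_1(x)=\exp[-(-\min(\Phi(z),\Psi(z))\log\overline{F}^{0}(x))^{\theta}]$ to match the margins of $\overline{H}^{z}$. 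Your explicit appeal to uniqueness in Sklar's theorem to conclude $\overline{H}=\overline{H}^{z}$ merely makes precise the step the paper leaves implicit.
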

More generally we have
\begin{proposition}
The class of second order frailty models with asymmetric logistic copula is stable under (\textbf{M}).
\end{proposition}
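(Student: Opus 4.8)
The plan is to reduce the statement to the copula level and then lift it back to frailty models, using Proposition \ref{Generalisation archimedean copula} together with the explicit correspondence between asymmetric logistic copulas and second order frailty models established just above. First I would record that a second order frailty model with asymmetric logistic copula is, by construction, a sdf $\overline{H}^{\scriptscriptstyle{0}}$ whose survival copula is
\begin{equation*}
C_{\overline{H}^{\scriptscriptstyle{0}}}(u,v)=\Pi\!\left(u^{1-a},v^{1-b}\right)C_\theta\!\left(u^{a},v^{b}\right),\qquad a,b\in(0,1],\ \theta\ge 1,
\end{equation*}
where $C_\theta$ is the Gumbel copula with generator $\varphi_{\scriptscriptstyle{0}}(t)=(-\log t)^\theta$. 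This is exactly the template of Proposition \ref{Generalisation archimedean copula} with $\kappa=a$, $\eta=b$ and $C_{\varphi_{\scriptscriptstyle{0}}}=C_\theta$; since Gumbel copulas are evc's and hence $TP_2$, the hypotheses of that proposition are met and $\overline{H}^{z}$ is defined for every $z$.

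Next I would apply Proposition \ref{Generalisation archimedean copula} to get
\begin{equation*}
C_{\overline{H}^{z}}(u,v)=\Pi\!\left(u^{1-\alpha(z)a},v^{1-\beta(z)b}\right)C_{\varphi_z}\!\left(u^{\alpha(z)a},v^{\beta(z)b}\right),
\end{equation*}
with $\varphi_z$ given by (\ref{phiz}). The key computation is that for the Gumbel generator this yields $\varphi_z(t)=(\Phi(z)\alpha(z))^{-\theta}(-\log t)^\theta$, a positive scalar multiple of $\varphi_{\scriptscriptstyle{0}}$. Because an Archimedean copula $\varphi^{-1}(\varphi(u)+\varphi(v))$ is invariant under the rescaling $\varphi\mapsto c\varphi$ with $c>0$, the Archimedean factor is unchanged, i.e. $C_{\varphi_z}=C_\theta$. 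Hence $C_{\overline{H}^{z}}$ is again the asymmetric logistic copula (\ref{asymmetric logistic}), now with parameters $(\alpha(z)a,\beta(z)b,\theta)$: the covariate acts multiplicatively on the two asymmetry indices and leaves $\theta$ untouched.

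Finally I would upgrade this copula statement to the level of frailty models. The preceding proposition exhibits, for any asymmetry pair in $(0,1]^2$ and any $\theta\ge1$, a second order frailty representation with $S_i=U_i+W$, where $W$ is positive stable of index $1/\theta$ and $U_1,U_2$ are positive stable with shape parameters $\tfrac{1}{\alpha(z)a}-1$ and $\tfrac{1}{\beta(z)b}-1$, whose copula is precisely the asymmetric logistic copula with those parameters. Choosing the inner margins $\overline{H}_1,\overline{H}_2$ as in that construction so that the marginals match $\overline{F}^{z}=(\overline{F}^{\scriptscriptstyle{0}})^{\Phi(z)}$ and $\overline{G}^{z}=(\overline{G}^{\scriptscriptstyle{0}})^{\Psi(z)}$, the resulting frailty sdf coincides with $\overline{H}^{z}$; thus $\overline{H}^{z}$ is a second order frailty model with asymmetric logistic copula, which is the claimed stability.

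The only genuine obstacle is the generator computation of the second step: one must verify that $\varphi_z$ is a rescaling of the Gumbel generator, so that the Archimedean factor does not drift to some other extended Archimedean copula and the new copula remains in the asymmetric logistic family. Once the rescaling invariance is in hand, the margin matching is routine and has already been carried out in the special case treated just above.
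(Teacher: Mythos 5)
Your proof is correct, but it takes a genuinely different route from the paper's. The paper stays entirely inside the extreme value framework: it writes the Pickands function $A(s)=1-\kappa+(\kappa-\eta)s+\left[\kappa^{\theta}(1-s)^{\theta}+\eta^{\theta}s^{\theta}\right]^{1/\theta}$ of the asymmetric logistic copula and feeds it into the transition formula (\ref{B dependence}) of Proposition \ref{Proposition B fonction de A}, which returns $B^{z}$ in the same parametric family with $\kappa'=\alpha(z)\kappa$, $\eta'=\beta(z)\eta$ and $\theta$ unchanged; the frailty part is then handled exactly as you do, by identifying the frailty measure of $\overline{H}^{z}$ through (\ref{copule H}). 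You instead work in the extended Archimedean framework: you recognize the asymmetric logistic copula as the Khoudraji--Genest product form with Gumbel core, invoke Proposition \ref{Generalisation archimedean copula}, and then need one extra observation that the paper's route never requires, namely that for $\varphi_{\scriptscriptstyle{0}}(t)=(-\log t)^{\theta}$ formula (\ref{phiz}) gives $\varphi_z=\left(\min(\Phi(z),\Psi(z))\right)^{-\theta}\varphi_{\scriptscriptstyle{0}}$, a positive scalar multiple, so that $C_{\varphi_z}=C_{\theta}$ by the rescaling invariance of Archimedean generators. That computation is correct, and it is exactly where the special role of the Gumbel family enters your argument: for any other $TP_2$ generator, $C_{\varphi_z}$ would drift within the Archimedean class and the three-parameter family would not be preserved, which is consistent with the paper's observation that Gumbel is the only copula that is simultaneously Archimedean and an evc. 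What the paper's route buys is economy: once (\ref{B dependence}) is available, stability of the parametric family is a one-line substitution, with all dependence-function verifications already done in Proposition \ref{Proposition B fonction de A}. What your route buys is transparency about why $\theta$ is untouched and the covariate acts only multiplicatively on the asymmetry parameters, and it avoids the EVC machinery altogether. On the frailty side the two arguments coincide: the new parameters $\alpha(z)\kappa$ and $\beta(z)\eta$ remain in $(0,1)$, and any sdf with such a copula is a second order frailty model, obtained by fitting the stable components of $(S_1,S_2)$ to the new parameters and choosing $\overline{H}_1,\overline{H}_2$ so that the margins match $\overline{F}^{z}$ and $\overline{G}^{z}$.
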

\begin{proof}
Let $A$ denote the dependence function of an asymmetric logistic copula.
\begin{equation*}
A(s):=1-\kappa +\left( \kappa -\eta \right) s+\left[ \kappa ^{\theta }\left(1-s\right) ^{\theta }+\eta ^{\theta }s^{\theta }\right] ^{1/\theta }.
\end{equation*}
By (\ref{B dependence}) it holds
\begin{equation*}
B^{z}(s)=1-\kappa ^{\prime }+\left( \kappa ^{\prime }-\eta ^{\prime }\right)s+\left[ \kappa ^{\prime \theta }\left( 1-s\right) ^{\theta }+\eta ^{\prime\theta }s^{\theta }\right] ^{1/\theta }
\end{equation*}
with
\begin{eqnarray*}
\kappa ^{\prime } &=&\alpha (z)\kappa \\
\eta ^{\prime } &=&\beta (z)\eta.
\end{eqnarray*}
These new parameters are in $\left( 0,1\right) ,$ as are $\kappa $ and $\eta$. We have proved that the class of sdf with asymmetric logistic copula is stable under (\textbf{M}). Any sdf with such a copula is necessarily a frailty sdf of order 2. Indeed this follows from (\ref{copule H}) which enables identifying the frailty measure $M$ of $\overline{H}^{z}$ as the joint distribution of $\left( S_{1},S_{2}\right) $ as defined here above.
\end{proof}
\begin{remark}
It can be seen that the only sdf which are frailty of order 2 with evc are precisely the frailty models with asymmetric logistic copula $C(u,v)=\Pi(u^{1-\kappa },v^{1-\eta })C_{\theta }(u^{\kappa },v^{\eta })$ with $0<\eta,\kappa <1$, and where $C_{\theta }$ is the standard Gumbel copula with parameter $\theta \geq 1$.
\end{remark}
\section{Simulation results}
\subsection{Stability of the estimate under the model} \label{sim_stab}
For a given copula $C_{\theta}$ in a parametric family we simulated $N$ independent couples $(X_i,Y_i)$, $i = 1,...,N$, with joint distribution function $C_{\theta}$. Estimation of $\theta$ was performed using plug-in technique, leading $\hat{\theta}$. We repeated the procedure 1000 times. We compared $C_{\theta}^z$ with $C_{\hat{\theta}}^z$ for various $z$, given known functions $\Phi$ and $\Psi$. Here $\Phi(z) = e^{\alpha z}$ with $\alpha = 1.5$ and $\Psi(z) = e^{\beta z}$ with $\beta = 2$. The figures hereunder show the mean relative error $\int \int \left\vert \frac{C_{\theta}^z(u,v)-C_{\hat{\theta}}^z(u,v)}{C_{\theta}^z(u,v)} \right\vert dC_{\theta}^z(u,v)$ with respect to $z$, together with the 95\% confidence interval. This indicator is obtained on a grid of 100 points in $[0,1] \times [0,1]$. Figure \ref{Clay_sim1} pertains to the Clayton copula $C_\theta(u,v) = (u^{-\theta} + v^{-\theta} -1)^{-\frac{1}{\theta}}$ with $\theta = 3$. The estimate $\hat{\theta}$ is defined through $\hat{\theta} = \frac{2\hat{\tau}}{1-\hat{\tau}}$ where
\begin{equation} \label{tau_emp}
\hat{\tau} = \frac{\text{number of concordant pairs} - \text{number of discordant pairs}}{
\left(\begin{array}{c}
N\\
2
\end{array}
\right)}
\end{equation} 
is the empirical estimate of the Kendall's tau \citep[see][p.158]{Nelsen2006}. Figure \ref{Gumb_sim1} pertains to the Gumbel copula $C_\theta(u,v) = \exp[-\{(-\log u)^\theta+(-\log v)^\theta\}^\frac{1}{\theta}]$ with $\theta=3$. The estimate $\hat{\theta}$ is defined through $\hat{\theta} = \frac{1}{1-\hat{\tau}}$. It appears from those curves that a good estimate in the reference zone ($z=0$) propagates accordingly to other zones (indexed by $z$), without deteriorating the estimation accuracy. These facts also hold for very small values of $N$; obviously the larger $N$, the better the accuracy, for all $z$.
\begin{remark}\
\begin{enumerate}
\item These simulations are closely related to some industrial application where the environmental variable $z$ has range $[0,0.3]$. Obviously $\alpha$ (resp. $\beta$) is fitted accordingly around 1.5 (resp. 2), as in these simulations. Changing the scale of $z$ modified the value of $\alpha$ and $\beta$. This entails that no comparison can be performed with higher values of $z$ on these graphs, keeping in mind the underlying applied statistical context.
\item In these simulations the paramater $\theta$ of the copula is estimated using the baseline group. The estimation of the copula under $z$ is obtained through a transformation of the copula under $z=0$. Taking into account all the pooled data for all $z$ values introduces complex estimation procedures. 
\end{enumerate}
\end{remark}
\newpage
\begin{figure}[!h]
\begin{center}
\includegraphics[width=14cm]{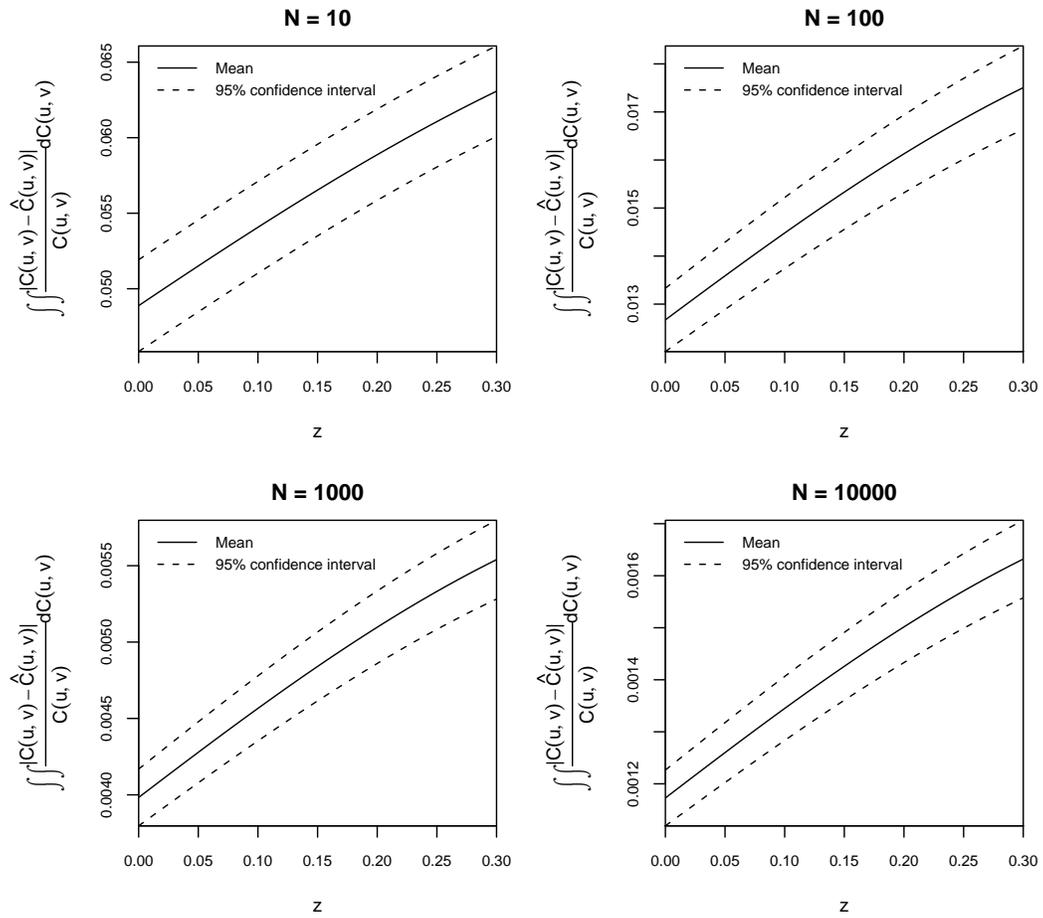}
\caption{Relative error pertaining to Clayton copula}
\label{Clay_sim1}
\end{center}
\end{figure}
\newpage
\begin{figure}[!h]
\begin{center}
\includegraphics[width=14cm]{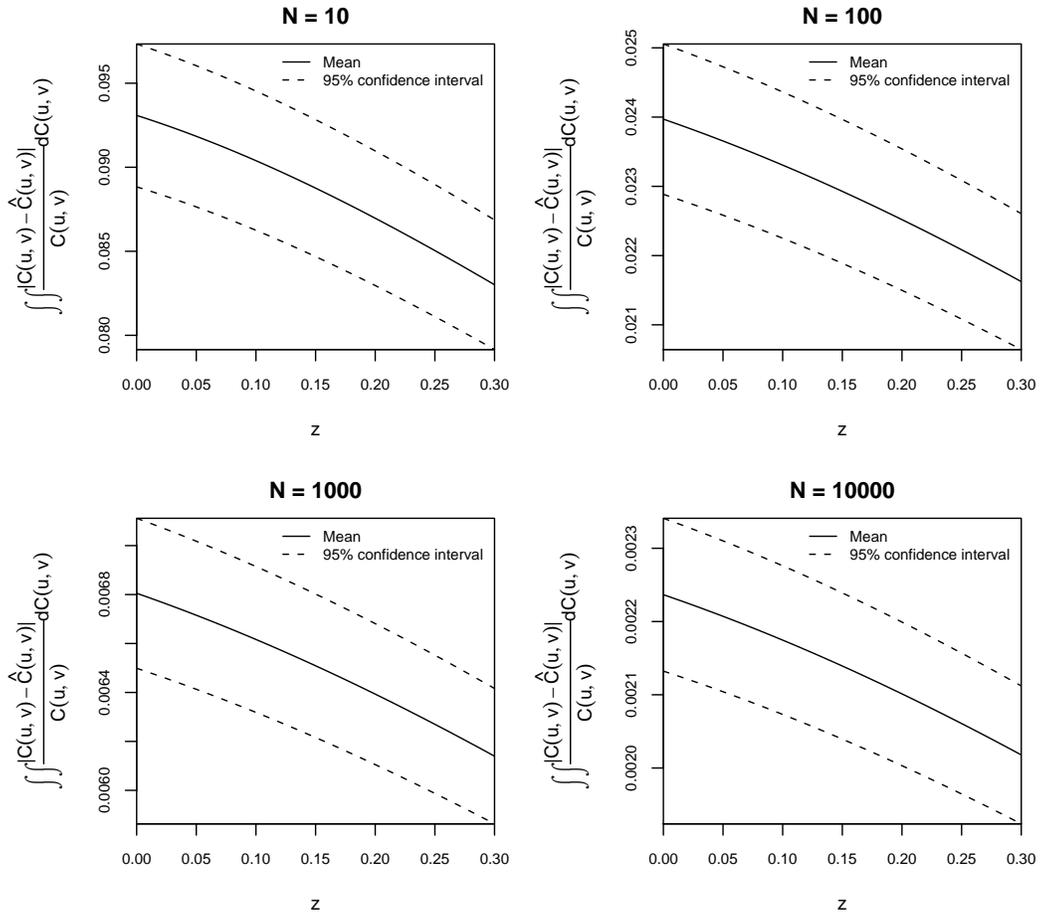}
\caption{Relative error pertaining to Gumbel copula}
\label{Gumb_sim1}
\end{center}
\end{figure}
\newpage
\subsection{A case study through simulation} \label{Cox simulation}
In this section, $z$ is bivariate, $\mathbf{z} = (z_1,z_2)$ with $z_i \in \{0;1\}$, $i=1,2$. We consider model \textbf{(M)} with Weibull marginals. In the reference zone, $\mathbf{z}^\prime=(0,0)$, the marginals are $W(2,12000)$ for $X$ and $W(1.5,8000)$ for $Y$. The copula in the reference zone is Clayton (archimedean) or Gumbel (which is both archimedean and e.v.c). The function $\Phi$ (resp $\Psi$) is $\exp(\boldsymbol{\alpha}^\prime \mathbf{z})$ (resp $\exp(\boldsymbol{\beta}^\prime \mathbf{z})$) where $\boldsymbol{\alpha}^\prime = (0.1,0.06)$ (resp $\boldsymbol{\beta}^\prime = (0.07,0.25)$). We simulated $N = 200$ couples with Gumbel copula or with Clayton copula under $\mathbf{z}^\prime=(0,0)$, $100$ values under $\mathbf{z}^\prime=(1,0)$ and $100$ values under $\mathbf{z}^\prime=(0,1)$. Simulations was performed using Khoudraji's algorithm \citep[see][]{Khoudraji95}. We estimated $\boldsymbol{\alpha}$ and $\boldsymbol{\beta}$ using the whole sample ($400$ couples) through $\text{Cox}$'s partial likelihood estimation method (implemented in R via the \textit{coxph} procedure); therefore we estimated $\boldsymbol{\alpha}$ and $\boldsymbol{\beta}$ independently by working separately with the $X_i$'s and the $Y_i$'s. The copula parameter $\theta$ under $\mathbf{z}^\prime=(0,0)$ was estimated through the plug-in of the empirical Kendall's tau. We used formula \eqref{prop archimedean} or formula \eqref{asymmetric logistic} in order to obtain an estimated copula $\hat{C}^{\mathbf{z}}$ under $\mathbf{z}^\prime \neq (0,0)$ (with $3$ parameters instead of $1$) by replacing $\alpha(\mathbf{z})$, $\beta(\mathbf{z})$ in (\ref{not1}) and (\ref{not2}) and $\theta$ by their estimators; the estimator of $\theta$ is unchanged and has been obtained under $\mathbf{z}^\prime=(0,0)$. On the other hand, these same formulas provide the theoretical copula $C^{\mathbf{z}}$ under $\mathbf{z}$. We performed the relative accuracy of the estimation scheme through $\int \int \left\vert \frac{C^{\mathbf{z}}(u,v)-\hat{C}^{\mathbf{z}}(u,v)}{C^{\mathbf{z}}(u,v)}\right\vert dC^{\mathbf{z}}(u,v)$. The procedure is repeated 1000 times. The results are given in Table \ref{sim_Cox}. We give the average of the uniform relative error over the 1000 simulations (bold character) and its 95-percent confidence interval.
\begin{table}[h!]
\begin{tabular}{|l|c|c|c|}
\hline 
 & $\mathbf{z}^\prime = (0,0)$ & $\mathbf{z}^\prime = (1,0)$ & $\mathbf{z}^\prime = (0,1)$\\ \hline
\multirow{2}{*}{Clayton ($\theta = 3$)} & $\boldsymbol{0.93\%}$ & $\boldsymbol{2.03\%}$ & $\boldsymbol{2.60\%}$ \\ 
& [0.88\%,0.97\%] & [1.95\%,2.12\%] & [2.49\%,2.71\%] \\ \hline
\multirow{2}{*} {Gumbel ($\theta = 3$)} & $\boldsymbol{0.81\%}$	& $\boldsymbol{1.55\%}$ & $\boldsymbol{2.02\%}$ \\
& [0.77\%,0.85\%] & [1.49\%,1.62\%] & [1.93\%,2.11\%] \\ \hline
\end{tabular}
\vspace {0.5cm}
\caption{Relative error for different copulas}
\label{sim_Cox}
\end{table}
\begin{table}[ht!]
\begin{tabular}{|l|c|c|c|}
\hline 
 & $\mathbf{z}^\prime = (0,0)$ & $\mathbf{z}^\prime = (1,0)$ & $\mathbf{z}^\prime = (0,1)$\\ \hline
\multirow{2}{*}{Clayton} & $\boldsymbol{3.49\%}$ & $\boldsymbol{5.73\%}$ & $\boldsymbol{11.65\%}$ \\ 
& [3.32\%,3.66\%] & [5.44\%,6.02\%] & [11.08\%,12.23\%] \\ \hline
\multirow{2}{*} {Gumbel} & $\boldsymbol{2.35\%}$	& $\boldsymbol{4.01\%}$ & $\boldsymbol{8.50\%}$ \\
& [2.24\%,2.46\%] & [3.82\%,4.22\%] & [8.11\%,8.89\%] \\ \hline
\end{tabular}
\vspace {0.5cm}
\caption{Relative error on Spearman's rho}
\label{err_spearman}
\end{table}
\newpage The estimators of $\Phi$ and $\Psi$ however are of mean accuracy, which nevertheless does not deteriorate the quality of the estimators of the copula, in all cases which we considered. In Table \ref{sim_Cox} the indicator mixes both propagation and estimation errors. The relative error w.r.t the theoretical propagated copula is very small in the case of the most commonly used baseline copulas. Measured in terms of Spearman's rho, the relative error does not deteriorate significantly either, as seen in Table \ref{err_spearman}.
\subsection{Propagation of misspecification errors}
\subsubsection{Assuming $\Phi$ and $\Psi$ known}
We simulated $N=200$ couples of r.v's with Clayton distribution function $C_\theta$ with parameter $\theta = 3$. Here $\Phi$ and $\Psi$ are as in Section \ref{sim_stab}. We estimated the Kendall's tau through the classical non parametric estimate (\ref{tau_emp}). We used a misspecified model assuming that the data have been generated under a Ali-Mikhail-Haq copula $G_{\hat{\theta}}$ \citep[][Table 4.1 p.116]{Nelsen2006} with parameter $\hat{\theta} = \frac{2}{3-\hat{\tau}}$. As seen in \cite{Nelsen2006}, both copulas present common features. Misspecification may therefore occur as presented here. For various $z$ we used formula \eqref{prop archimedean} to define both the true copula $C_{\theta}^z$ and the misspecified estimated copula $G_{\hat{\theta}}^z$. The misspecification error is defined through $Err(z) = \int \int \left\vert \frac{C_{\theta}^z(u,v)-G_{\hat{\theta}}^z(u,v)}{C_{\theta}^z(u,v)}\right\vert dC_{\theta}^z(u,v)$.
\begin{figure}[!h]
\begin{center}
\includegraphics[height=11cm,width=11cm]{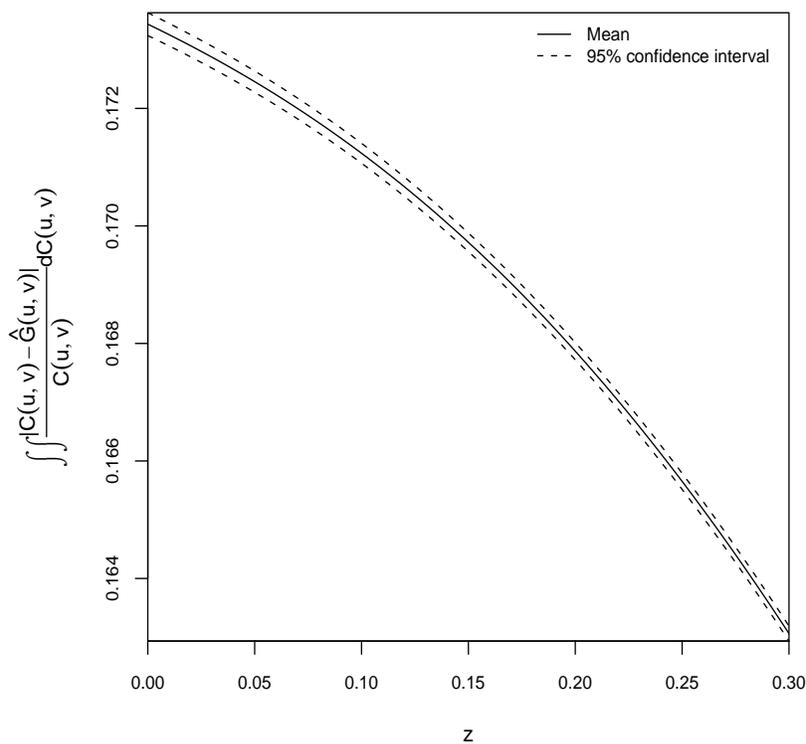}
\caption{Relative error pertaining to the misspecification when $\Phi$ and $\Psi$ are known}
\label{Clay_Gumb_sim3}
\end{center}
\end{figure}
\newpage
\subsubsection{Assuming $\Phi$ and $\Psi$ unknown}
We simulated $N=200$ couples of r.v's with Clayton distribution function $C_\theta$ with parameter $\theta = 3$. $z$ is bivariate and the functions $\Phi$ and $\Psi$ are as in Section \ref{Cox simulation}. We estimated $\Phi$ and $\Psi$ through partial likelihood and we used a misspecified model assuming that the data have been generated under a Ali-Mikhail-Haq copula $G_{\hat{\theta}}$. The results in Table \ref{sim_Cox_CG} show that the misspecification error is of order $17\%$ and keeps stable through the propagation. At the contrary Table \ref{sim_Cox} shows that the error under the true model is much smaller and propagates with great accuracy. This enlights the need for a good specification in this model.
\begin{table}[h!]
\begin{tabular}{|l|c|c|c|}
\hline
 & $\textbf{z}^\prime = (0,0)$ & $\textbf{z}^\prime = (1,0)$ & $\textbf{z}^\prime = (0,1)$\\ \hline
\multirow{2}{*} {Clayton - AMH} & $\boldsymbol{17.38\%}$	& $\boldsymbol{17.61\%}$ & $\boldsymbol{14.72\%}$ \\
& [17.36\%,17.40\%] & [17.54\%,17.67\%] & [14.65\%,14.80\%] \\ \hline
\end{tabular}
\vspace {0.5cm}
\caption{Relative error $Err(z)$ pertaining to the misspecification when $\Phi$ and $\Psi$ are unknown}
\label{sim_Cox_CG}
\end{table}
\begin{table}[h!]
\begin{tabular}{|l|c|c|c|}
\hline
 & $\textbf{z}^\prime = (0,0)$ & $\textbf{z}^\prime = (1,0)$ & $\textbf{z}^\prime = (0,1)$\\ \hline
\multirow{2}{*} {Clayton - AMH} & $\boldsymbol{53.39\%}$	& $\boldsymbol{54.85\%}$ & $\boldsymbol{55.29\%}$ \\
& [53.31\%,53.46\%] & [54.61\%,55.09\%] & [54.83\%,55.75\%] \\ \hline
\end{tabular}
\vspace {0.5cm}
\caption{Relative error on Spearman's rho}
\label{err_spear_cg}
\end{table}
\newline In Table \ref{err_spear_cg} the relative error of the Spearman's rho is presented, when a Clayton baseline copula is substituted by an AMH. The error deteriorates while propagating along $z$.
\section{Conclusion}
In this paper a new model for bivariate dependence is proposed. The margins follow a PH model; the covariate acts both on the margins and on their dependence function, handled through the copula. Such model is in contrast with other approaches in which the role of the covariate is restricted to its influence on the margins, the copula deserving a separate study. Applications of the present model fit in "father and sons" paradigm as well as in various industrial contexts. Classical families of copulas (archimedean or extreme value) are shown to be stable under the action of the covariate, in the sens that the model acts in a transitive way in those classes. In parametric classes this model provides explicit transformation of the parameter of the copula w.r.t the covariate. This model is adequate for the case of positive dependence between the margins for all the values of the covariate.
\newpage
\appendix
\section{Proof of Proposition \ref{Prop Hbar sdf}}
Trivially we show that $\lim\limits_{x_{j}\rightarrow +\infty }\overline{H}^{z}(x_{1},x_{2})=0,\,j=1,2;\;\text{and}\;\lim\limits_{\substack{ {x_{1}\rightarrow 0} \\ {x_{2}\rightarrow 0}}}\overline{H}^{z}(x_{1},x_{2})=1$
\newline
Now we will prove that $\overline{H}^{z}$ satisfies the rectangle inequality (\citet{Joe1997}, p.11) which we recall here: for all $(a_{1},a_{2}),(b_{1},b_{2})$ with $a_{1}<b_{1}$, $a_{2}<b_{2}$,
\begin{equation*}
\Delta H:=\overline{H}^{z}(a_{1},a_{2})-\overline{H}^{z}(a_{1},b_{2})-\overline{H}^{z}(b_{1},a_{2})+\overline{H}^{z}(b_{1},b_{2})\geq 0.
\end{equation*}
\begin{eqnarray*}
\Delta H=\left( \overline{F}^{\scriptscriptstyle0}(a_{1})\right) ^{\Phi(z)-\Psi (z)}\left[ \left( \overline{H}^{\scriptscriptstyle 0}(a_{1},a_{2})\right) ^{\Psi (z)}-\left( \overline{H}^{\scriptscriptstyle 0}(a_{1},b_{2})\right) ^{\Psi (z)}\right]  && \\
-\left( \overline{F}^{\scriptscriptstyle0}(b_{1})\right) ^{\Phi (z)-\Psi (z)} \left[ \left( \overline{H}^{\scriptscriptstyle0}(b_{1},a_{2})\right) ^{\Psi(z)}-\left(\overline{H}^{\scriptscriptstyle0}(b_{1},b_{2})\right) ^{\Psi
(z)}\right]  &&
\end{eqnarray*}
We denote $r(t)=\left( \overline{H}^{\scriptscriptstyle0}(t,b_{2})\right)^{\Psi (z)}-\left( \overline{H}^{\scriptscriptstyle0}(t,a_{2})\right) ^{\Psi(z)}$. If $\left( \overline{H}^{\scriptscriptstyle0}\right) ^{\Psi (z)}$ is a sdf (which implies that $\left(\overline{H}^{\scriptscriptstyle0}\right)^{\Psi (z)}$ is a 2-increasing function), then by Lemma 2.1.3 in \citet{Nelsen2006}, the function $r$ is nondecreasing. Therefore
\begin{eqnarray*}
\Delta H &=& \left(\overline{F}^{\scriptscriptstyle0}(b_{1})\right)^{\Phi(z)-\Psi(z)}r(b_{1}) - \left(\overline{F}^{\scriptscriptstyle0}(a_{1})\right)^{\Phi(z)-\Psi (z)}r(a_{1}) \\
&=& r(b_{1})\left[ \left( \overline{F}^{\scriptscriptstyle0}(b_{1})\right)^{\Phi (z)-\Psi (z)}-\frac{r(a_{1})}{r(b_{1})}\left( \overline{F}^{\scriptscriptstyle0}(a_{1})\right) ^{\Phi (z)-\Psi (z)}\right].
\end{eqnarray*}
Under (\ref{H}) and if $\Phi (z)\geq \Psi (z)>0$, both $\left( \overline{H}^{\scriptscriptstyle0}\right) ^{\Psi (z)}$ and $\left( \overline{F}^{\scriptscriptstyle0}\right) ^{\Phi (z)-\Psi (z)}$ are sdf's.\ Use the fact that  $r(b_{1})$ is negative (which holds since $\left( \overline{H}^{\scriptscriptstyle0}\right) ^{\Psi (z)}$ is a decreasing function of its
second argument) to obtain 
\begin{equation*}
r(b_{1})\left[ \left( \overline{F}^{\scriptscriptstyle0}(b_{1})\right)^{\Phi (z)-\Psi (z)}-\frac{r(a_{1})}{r(b_{1})}\left( \overline{F}^{\scriptscriptstyle0}(a_{1})\right) ^{\Phi (z)-\Psi (z)}\right] \geq 0.
\end{equation*}
Note that if (\ref{H}) does not hold, $\overline{H}^{z}$ is  still  a sdf when $\Phi (z)\geq \Psi (z) \geq 1$.
\qedsymbol
\section{Proof of Proposition \ref{Proposition archimedean copula} and \ref{Generalisation archimedean copula}}
\subsection{Proof of Proposition \ref{Proposition archimedean copula}}
Write $C_{\overline{H}^{\scriptscriptstyle 0}}(u,v)=\varphi_{\scriptscriptstyle 0}^{-1}(\varphi_{\scriptscriptstyle 0}(u)+\varphi_{\scriptscriptstyle 0}(v))$. Using (\ref{CopHz w.r.t. CopF0}) some calculus yields \eqref{prop archimedean}. We prove that $\varphi_z(t)=\varphi_{\scriptscriptstyle 0} \left(t^{\frac{1}{\min(\Phi(z),\Psi(z))}}\right)$ is also a generator for all values of $t$ in $[0,1]$.\\
\begin{enumerate}
\item $\varphi_z(1) = \varphi_{\scriptscriptstyle 0}(1) = 0$
\item $\varphi_z(t)$ is strictly decreasing in $t$, since $t^{\frac{1}{\min(\Phi(z),\Psi(z))}}$ is increasing in $t$ and $\varphi_{\scriptscriptstyle 0}(t)$ is strictly decreasing.
\item It holds $\varphi_{\scriptscriptstyle 0}^{\prime}(t) + t\varphi_{\scriptscriptstyle 0}^{\prime\prime}(t) \geq 0$ since since $C_{\overline{H}^{\scriptscriptstyle 0}}$ is $TP_2$. Let us prove that $\varphi_z^{\prime \prime}(t) \geq 0$. We have, 
\begin{eqnarray*}
\varphi_z^{\prime\prime}(t) &=& m(m-1)t^{m-2}\varphi_{\scriptscriptstyle 0}^{\prime}(t^m) + (mt^{m-1})^2\varphi_{\scriptscriptstyle 0}^{\prime\prime}(t^m) \\
&\geq& m^2t^{m-2}\left[\varphi_{\scriptscriptstyle 0}^{\prime}(t^m) + t^m \varphi_{\scriptscriptstyle 0}^{\prime\prime}(t^m)\right] \geq 0
\end{eqnarray*}
where $m=\frac{1}{\min(\Phi(z),\Psi(z))}$, which proves that $\varphi_z$ is convex.
\end{enumerate}
We now prove that $C_{\overline{H}^z}$ in (\ref{prop archimedean}) is $TP_2$. It is readily checked that the product of two $TP_2$ functions is $TP_2$.
Now $(u,v)\mapsto u^a v^b$ is $TP_2$ for all $0 \leq a,b \leq 1$ and $(u,v)\mapsto C_{\varphi_{\scriptscriptstyle{z}}}(u^a,v^b)$ is $TP_2$ for all $0 \leq a,b \leq 1$, since $\varphi_{\scriptscriptstyle{z}}$ satisfies $\varphi_{\scriptscriptstyle z}^{\prime}(t) + t\varphi_{\scriptscriptstyle z}^{\prime\prime}(t) \geq 0$. We conclude that $C_{\overline{H}^{\scriptscriptstyle{z}}}$ is $TP_2$ as a product of two $TP_2$ functions.
\qedsymbol
\subsection{Proof of Proposition \ref{Generalisation archimedean copula}}
$C_{\overline{H}^z}$ is (\ref{asymmetric copula}) through simple calculations. That $(u,v)\mapsto C_{\overline{H}^z}(u,v)$ is $TP_2$ is proved as in Proposition \ref{Proposition archimedean copula}. \qedsymbol
\section{Proof of Proposition \ref{Proposition B fonction de A}}
If $K(z)<1$ then
\begin{eqnarray*}
\hat{C}_{H^{z}}(u,v) &=&\exp \left[ \frac{\Phi (z)-\Psi (z)}{\Phi (z)}\ln u \right] \exp \left[ \left( \frac{\ln u}{\Phi (z)}+\frac{\ln v}{\Psi (z)} \right) A\left( \frac{\frac{\ln v}{\Psi (z)}}{\frac{\ln u}{\Phi (z)}+\frac{\ln v}{\Psi (z)}}\right) \right] ^{\Psi (z)} \\
&=&\exp \left[ \frac{\Phi (z)-\Psi (z)}{\Phi (z)}\ln u+\left( \frac{\Psi (z)}{\Phi (z)}\ln u+\ln v\right) A\left( \frac{\ln v}{\frac{\Psi (z)}{\Phi (z)}\ln u+\ln v}\right) \right] \\
&\underset{\scriptscriptstyle(u=v^{\frac{1}{s}-1})}{=}&\exp \left[ \left(\frac{1}{s}-1\right) \frac{\Phi (z)-\Psi (z)}{\Phi (z)}\ln v+ \left[\frac{\Psi (z)}{\Phi (z)}\left( \frac{1}{s}-1\right) +1\right] A\left( \frac{1}{\left( \frac{1}{s}-1\right) \frac{\Psi (z)}{\Phi (z)}+1}\right)\ln v \right] \\
&=&\exp \left[ \frac{\ln v}{s}\left\{\left[ (1-s)\frac{\Phi (z)-\Psi (z)}{\Phi (z)}\right] +\left[ (1-s)\frac{\Psi (z)}{\Phi (z)}+s\right] A\left( \frac{s}{(1-s)\frac{\Psi (z)}{\Phi (z)}+s}\right)\right\} \right] \\
&=&\exp \left[ \ln (uv)B_{1}^{z}\left( \frac{\ln v}{\ln uv}\right) \right]
\end{eqnarray*}
with 
\begin{equation}
B_{1}^{z}(s)=(1-s)\frac{\Phi (z)-\Psi (z)}{\Phi (z)}+\left[ (1-s)\frac{\Psi(z)}{\Phi (z)}+s\right] A\left( \frac{s}{(1-s)\frac{\Psi (z)}{\Phi (z)}+s}\right)
\end{equation}
Hence
\begin{equation*}
\begin{aligned} B_1^z(s) &= (1-s)[1-K(z)] + \left[(1-s)K(z) + s\right]A\left(\frac{s}{(1-s)K(z) + s}\right)&\\ &= 1-K(z) - s[1-K(z)] + \left[(1-s)K(z) + s\right]A\left(\frac{s}{(1-s)K(z) + s}\right)&
\end{aligned}
\end{equation*}
If $K(z)>1$
\begin{eqnarray*}
\hat{C}_{H^{z}}(u,v) &=&\exp \left[ \frac{\Psi (z)-\Phi (z)}{\Psi (z)}\ln v \right] \exp \left[ \left( \frac{\ln u}{\Phi (z)}+\frac{\ln v}{\Psi (z)} \right) A\left( \frac{\frac{\ln v}{\Psi (z)}}{\frac{\ln u}{\Phi (z)}+\frac{\ln v}{\Psi (z)}}\right) \right] ^{\Phi (z)} \\
&=&\exp \left[ \frac{\Psi (z)-\Phi (z)}{\Psi (z)}\ln v+\left( \ln u+\frac{\Phi (z)}{\Psi (z)}\ln v\right) A\left( \frac{\ln v}{\frac{\Psi (z)}{\Phi (z)}\ln u+\ln v}\right) \right] \\
&\underset{\scriptscriptstyle(u=v^{\frac{1}{s}-1})}{=}&\exp \left[ \frac{\Psi (z)-\Phi (z)}{\Psi (z)}\ln v+\left[ \left( \frac{1}{s}-1\right) + \frac{\Phi (z)}{\Psi (z)}\right] A\left( \frac{1}{\left( \frac{1}{s}-1\right) \frac{\Psi (z)}{\Phi (z)}+1}\right)\ln v \right] \\
&=&\exp \left[ \frac{\ln v}{s}\left\{\left( s\frac{\Psi (z)-\Phi (z)}{\Psi (z)}\right) +\left[ (1-s)+s\frac{\Phi (z)}{\Psi (z)}\right] A\left( \frac{s}{(1-s)\frac{\Psi (z)}{\Phi (z)}+s}\right)\right\} \right] \\
&=&\exp \left[ \ln (uv)B_{2}^{z}\left( \frac{\ln v}{\ln uv}\right) \right]
\end{eqnarray*}
with
\begin{equation}
B_{2}^{z}(s)=s\frac{\Psi (z)-\Phi (z)}{\Psi (z)}+\left[ (1-s)+s\frac{\Phi (z)}{\Psi (z)}\right] A\left( \frac{s}{(1-s)\frac{\Psi (z)}{\Phi (z)}+s}\right)
\end{equation}
Hence
\begin{equation*}
\begin{aligned} B_2^z(s) &= s\left(1-\frac{1}{K(z)}\right) + \left[\frac{s}{K(z)} + 1-s\right]A\left(\frac{s}{(1-s)K(z) + s}\right)& \\
&= - \frac{s}{K(z)}[1-K(z)] + \frac{1}{K(z)}\left[(1-s)K(z) + s\right]A\left(\frac{s}{(1-s)K(z) + s}\right)& \end{aligned}
\end{equation*}
If $K(z)=1$
\begin{equation*}
\hat{C}_{H^{z}}(u,v)=\exp \left[ \ln (uv)B_{3}^{z}\left( \frac{\ln v}{\ln uv}\right) \right]
\end{equation*}
with 
\begin{equation}
B_{3}^{z}(s)=A(s)
\end{equation}
We have proved that whatever $K(z)$
\begin{equation*}
\begin{aligned} B^z(s) &= 1-\min(K(z),1) - s\min\left(\frac{1}{K(z)},1\right)[1-K(z)]&\\ & + \min\left(\frac{1}{K(z)},1\right)\left[(1-s)K(z) + s\right]A\left(\frac{s}{(1-s)K(z) + s}\right)&\\ &=1- W(z)K(z) -
sW(z)[1-K(z)] + W(z)\left[(1-s)K(z) + s\right]A\left(\frac{s}{(1-s)K(z) + s}\right)& 
\end{aligned}
\end{equation*}
We prove that $B^{z}$ is a dependence function. It holds
\begin{eqnarray*}
B^{z}(0) &=&1-W(z)K(z)+W(z)K(z)A(0) \\
&=&1,\;\text{since $A(0)=1$}
\end{eqnarray*}
and
\begin{eqnarray*}
B^{z}(1) &=&1-W(z)K(z)-W(z)[1-K(z)]+W(z)A(1) \\
&=&1-W(z)+W(z)A(1) \\
&=&1,\;\text{since $A(1)=1.$}
\end{eqnarray*}
We prove the upper and lower bounds for $B^{z}$. \\
\newline Upper bound. Using $\ A(s)\leq 1\;$\ for all $\ s$ in$[0,1]$
\begin{equation*}
B^{z}(s)\leq 1-W(z)K(z)-sW(z)[1-K(z)]+W(z)[(1-s)K(z)+s]=1.
\end{equation*}
Lower bound. Using $A(s)\geq \max (s,1-s)$
\begin{equation*}
B^{z}(s)\geq 1-W(z)K(z)-sW(z)[1-K(z)]+W(z)\max \left[ s,(1-s)K(z)\right] .
\end{equation*}
We prove that the RHS in the above display is larger than both $s$ and $1-s$. \\
Since $\max \left[ s,(1-s)K(z)\right] \geq s$,
\begin{eqnarray*}
RHS &\geq &1-W(z)K(z)-sW(z)[1-K(z)]+sW(z) \\
&=&1-(1-s)\min (K(z),1) \geq s.
\end{eqnarray*}
Since $\max \left[ s,(1-s)K(z)\right] \geq (1-s)K(z)$,
\begin{eqnarray*}
RHS &\geq &1-W(z)K(z)-sW(z)[1-K(z)]+(1-s)W(z)K(z) \\
&=&1-sW(z)\geq 1-s
\end{eqnarray*}
as sought.
It remains to prove that $B^{z}(s)$ is a convex function. Some calculus yields
\begin{equation*}
\frac{\partial ^{2}B^{z}}{\partial s^{2}}(s)=\frac{W(z)K^{2}(z)}{[(1-s)K(z)+s]^{3}}\frac{\partial ^{2}}{\partial t^{2}}A(t)\Big|_{t=\frac{s}{(1-s)K(z)+s}}\geq 0
\end{equation*}
as sought.
\qedsymbol
\section{Proof of Proposition \ref{Proposition transition B}}
Write
\begin{equation*}
B^{z^{\prime }}(s)=1-W(z^{\prime })K(z^{\prime })-sW(z^{\prime})[1-K(z^{\prime })]+W(z^{\prime })[(1-s)K(z^{\prime })+s]A\left( \frac{s}{K(z^{\prime })(1-s)+s}\right).
\end{equation*}
In the above display it holds
\begin{equation*}
A\left( \frac{s}{K(z^{\prime })(1-s)+s}\right) =A\left( \frac{\frac{s}{(1-s)\frac{K(z^{\prime})}{K(z)}+s}}{K(z)\left(1-\frac{s}{(1-s)\frac{K(z^{\prime })}{K(z)}+s}\right)+\frac{s}{(1-s)\frac{K(z^{\prime })}{K(z)}+s}}\right).
\end{equation*}
The RHS in this latter expression can be written as a function of $B^{z}\left( \frac{s}{(1-s)\frac{K(z^{\prime })}{K(z)}+s}\right) $. Some calculus yields
\begin{equation}
\begin{aligned} B^{z'}(s) &= 1 - W(z')K(z') - sW(z')[K(z)-K(z')] - [1-W(z)K(z)]\frac{W(z')}{W(z)}\left[(1-s)\frac{K(z')}{K(z)}+s\right]&\\
&+\frac{W(z')}{W(z)}\left[(1-s)\frac{K(z')}{K(z)}+s\right] B^z\left(\frac{s}{(1-s)\frac{K(z')}{K(z)}+s}\right)& \end{aligned}
\end{equation}
which is (\ref{transition Bz'_Bz}). \qedsymbol
\newpage
\bibliographystyle{chicago}
\bibliography{Biblio_Cox}
\end{document}